\documentclass[oneside,english]{amsart}
\usepackage[T1]{fontenc}
\usepackage[latin9]{inputenc}
\usepackage{verbatim}
\usepackage{amstext}
\usepackage{amsthm}
\usepackage{amssymb}
\usepackage{setspace}

\makeatletter
\numberwithin{equation}{section}
\theoremstyle{plain}
\newtheorem{thm}{\protect\theoremname}[section]
\theoremstyle{plain}
\newtheorem{prop}[thm]{\protect\propositionname}
\theoremstyle{plain}
\newtheorem{lem}[thm]{\protect\lemmaname}
\theoremstyle{plain}
\newtheorem{cor}[thm]{\protect\corollaryname}
\theoremstyle{definition}
\newtheorem{defn}[thm]{\protect\definitionname}
\theoremstyle{remark}
\newtheorem*{rem*}{\protect\remarkname}

\makeatother

\usepackage{babel}
\providecommand{\corollaryname}{Corollary}
\providecommand{\definitionname}{Definition}
\providecommand{\lemmaname}{Lemma}
\providecommand{\propositionname}{Proposition}
\providecommand{\remarkname}{Remark}
\providecommand{\theoremname}{Theorem}

\begin{document}
\title[Hankel Determinants of Bernoulli Polynomials]{Hankel Determinants of Certain Sequences Of Bernoulli Polynomials:
A Direct Proof of an Inverse Matrix Entry from Statistics}
\author{Lin Jiu}
\address{Zu Chongzhi Center for Mathematics and Computational Sciences, Duke
Kunshan University, Kunshan, Suzhou, Jiangsu Province, PR China, 315216}
\email{lin.jiu@dukekunshan.edu.cn}

\author{Ye Li*}
\address{Class of 2023, Duke
Kunshan University, Kunshan, Suzhou, Jiangsu Province, PR China, 215316.}
\email{ye.li619@dukekunshan.edu.cn}
\thanks{*corresponding author}
\begin{abstract}
We calculate the Hankel determinants of sequences of Bernoulli polynomials.
This corresponding Hankel matrix comes from statistically estimating
the variance in nonparametric regression. Besides its entries' natural
and deep connection with Bernoulli polynomials, a special case of
the matrix can be constructed from a corresponding Vandermonde matrix.
As a result, instead of asymptotic analysis, we give a direct proof
of calculating an entry of its inverse. Further extensions also include
an identity of Stirling numbers of the both kinds. 
\end{abstract}

\keywords{Hankel determinant, Bernoulli polynomial, Vandermonde matrix}
\subjclass[2020]{Primary 11B68; Secondary 05A19; 11B73; 15A15}
\maketitle

\section{Introduction}

The \emph{Hankel determinant} of a sequence $\mathbf{c}=(c_{0},c_{1},\ldots)$,
denoted by $H_{n}(\mathbf{c})$ or $H_{n}(c_{k})$, is defined as
the determinant of the \emph{Hankel matrix}, or \emph{persymmetric
matrix}, given by 
\begin{equation}
(c_{i+j})_{0\leq i,j\leq n}=\begin{pmatrix}c_{0} & c_{1} & c_{2} & \cdots & c_{n}\\
c_{1} & c_{2} & c_{3} & \cdots & c_{n+1}\\
c_{1} & c_{2} & c_{3} & \cdots & c_{n+2}\\
\vdots & \vdots & \vdots & \ddots & \vdots\\
c_{n} & c_{n+1} & c_{n+2} & \cdots & c_{2n}
\end{pmatrix}.\label{eq:DEFHankelDet}
\end{equation}
Hankel determinants of various classes of sequences have been extensively
studied, partly due to their close relationship with classical orthogonal
polynomials; see, e.g., \cite[Ch.~2]{Is}; And for numerous results
see, e.g., the very extensive treatments in \cite{Kr1,Kr2,Mi}, and
the numerous references provided there. 

Applications and appearance of Hankel matrices and determinants also
involve statistics. Dai et al.~\cite{Wenlin} studied the following
Hankel determinant from nonparametric regression. Define 
\[
I_{k}:=\sum_{c=1}^{r}c^{k}
\]
and the Hankel matrix 
\[
V_{n}:=\begin{pmatrix}I_{0} & I_{2} & \cdots & I_{2n}\\
I_{2} & I_{4} & \cdots & I_{2n+2}\\
\vdots & \vdots & \ddots & \vdots\\
I_{2n} & I_{2n+2} & \cdots & I_{4n}
\end{pmatrix}.
\]
We shall show when $V_{k}$ is invertible as follows. 
\begin{prop}
\label{prop:invertible}$V_{n}$ is invertible iff $n<r$. 
\end{prop}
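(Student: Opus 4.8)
The plan is to exhibit $V_{n}$ as a Gram matrix built from a Vandermonde matrix, and then read off invertibility from the rank of that Vandermonde matrix. The starting observation is that the $(i,j)$ entry of $V_{n}$ factors over the summation index $c$: since
\[
I_{2(i+j)}=\sum_{c=1}^{r}c^{2(i+j)}=\sum_{c=1}^{r}\bigl(c^{2}\bigr)^{i}\bigl(c^{2}\bigr)^{j},
\]
we may write $V_{n}=W^{T}W$, where $W$ is the $r\times(n+1)$ matrix with entries $W_{c,j}=\bigl(c^{2}\bigr)^{j}$ for $1\le c\le r$ and $0\le j\le n$. This $W$ is a Vandermonde matrix in the nodes $x_{c}:=c^{2}$, which are pairwise distinct because $1^{2},2^{2},\ldots,r^{2}$ are.

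Next I would compute $\det V_{n}$ by the Cauchy--Binet formula applied to $V_{n}=W^{T}W$. This expresses $\det V_{n}$ as a sum, over all $(n+1)$-element subsets $\{c_{0}<c_{1}<\cdots<c_{n}\}\subseteq\{1,\ldots,r\}$, of the squares of the corresponding $(n+1)\times(n+1)$ minors of $W$. Each such minor is an ordinary Vandermonde determinant $\prod_{0\le i<j\le n}\bigl(c_{j}^{2}-c_{i}^{2}\bigr)$, so
\[
\det V_{n}=\sum_{1\le c_{0}<\cdots<c_{n}\le r}\ \prod_{0\le i<j\le n}\bigl(c_{j}^{2}-c_{i}^{2}\bigr)^{2}.
\]

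From this formula the proposition is immediate. If $n<r$, i.e.\ $n+1\le r$, there is at least one admissible subset, and every summand is strictly positive because the nodes $c^{2}$ are distinct; hence $\det V_{n}>0$ and $V_{n}$ is invertible. If $n\ge r$, i.e.\ $n+1>r$, then there are no $(n+1)$-element subsets of $\{1,\ldots,r\}$, the sum is empty, and $\det V_{n}=0$; equivalently, $W$ has only $r<n+1$ rows and so cannot have full column rank $n+1$, forcing $V_{n}=W^{T}W$ to be singular.

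I do not expect a genuine obstacle here: once the Gram/Vandermonde factorization is spotted, the distinctness of the nodes $c^{2}$ together with Cauchy--Binet do all the work. The only points requiring a little care are the dimension bookkeeping---$W$ is $r\times(n+1)$, so full column rank is possible exactly when $r\ge n+1$---and confirming that the squared-node Vandermonde minors are nonzero, which follows from $c\mapsto c^{2}$ being injective on the positive integers.
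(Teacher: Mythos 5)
Your proof is correct. The factorization $V_{n}=W^{T}W$ with $W_{c,j}=(c^{2})^{j}$ is right (the $(i,j)$ entry of $W^{T}W$ is indeed $\sum_{c=1}^{r}c^{2(i+j)}=I_{2(i+j)}$), the Cauchy--Binet expansion into squared Vandermonde minors in the distinct nodes $1^{2},\ldots,r^{2}$ is valid, and the rank bound $\operatorname{rank}(W^{T}W)\le r$ disposes of all $n\ge r$ at once. This is, however, a genuinely different route from the paper's. The paper uses the Gram/Vandermonde factorization only in the square case $n=r-1$ (its $V_{r-1}=VS(r)^{T}VS(r)$), deduces invertibility for $n<r-1$ by observing that $V_{n}$ is a principal submatrix of the positive-definite $V_{r-1}$, handles $n>r$ by noting that its explicit Hankel-determinant product formulas acquire a vanishing factor, and treats the remaining case $n=r$ separately by exhibiting an explicit null vector whose entries are the triangle $T(r+1,k+1)$ (OEIS A204579), proved by induction. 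Your argument is shorter, more elementary, and uniform across all cases; what the paper's longer detour buys is the by-products it is after: the explicit null vector yields the Stirling-number identity of its final section, and the vanishing-for-$n>r$ step showcases the closed-form Hankel determinants that are the paper's main results. (The paper also remarks that the proposition follows directly from its explicit formula for $\det V_{n}$, which your Cauchy--Binet sum is a positive-sum analogue of.)
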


In fact, a more specific result gives the left-upper entry of the
inverse is given, for $n=r-1$. One can find a proof in \cite{Inverse},
by asymptotic analysis, for the following result. 
\begin{prop}
\label{prop:Inverse} $(V_{r-1}^{-1})_{1,1}=2\left(\binom{4r}{2r}/\binom{2r}{r}^{2}-1\right)$,
where the left-hand sides is the $(1,1)$ entry of the inverse matrix
of $V_{r-1}$. 
\end{prop}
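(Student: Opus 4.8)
The plan is to exploit the power-sum structure of the entries to factor $V_{r-1}$ as a Gram matrix built from a Vandermonde matrix, which converts the inverse-entry computation into a question about Lagrange interpolation. Since $I_{2(i+j)}=\sum_{c=1}^{r}c^{2(i+j)}=\sum_{c=1}^{r}(c^{2})^{i}(c^{2})^{j}$, I would introduce the $r\times r$ matrix $W$ with $W_{i,c}=(c^{2})^{i}=c^{2i}$ for $0\le i\le r-1$ and $1\le c\le r$. Then $V_{r-1}=WW^{\mathsf T}$, and because $W$ is a Vandermonde matrix in the distinct nodes $y_c=c^2$, its determinant $\prod_{1\le a<b\le r}(b^{2}-a^{2})$ is nonzero, consistent with Proposition~\ref{prop:invertible} at $n=r-1$. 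Consequently $V_{r-1}^{-1}=W^{-\mathsf T}W^{-1}$, so the sought entry is the squared norm of the first column of $W^{-1}$:
\[
(V_{r-1}^{-1})_{1,1}=\sum_{c=1}^{r}\bigl[(W^{-1})_{c,0}\bigr]^{2}.
\]

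Next I would identify $(W^{-1})_{c,0}$ explicitly. The relation $W^{-1}W=I$ says that the polynomial $\sum_{i}(W^{-1})_{c,i}\,y^{i}$ equals $\delta_{cd}$ at each node $y_d=d^{2}$, hence it is the Lagrange basis polynomial $\ell_c$, and its constant term is
\[
(W^{-1})_{c,0}=\ell_c(0)=\prod_{\substack{m=1\\ m\ne c}}^{r}\frac{-m^{2}}{c^{2}-m^{2}}.
\]
Evaluating numerator and denominator as factorials (splitting $c^{2}-m^{2}=(c-m)(c+m)$ and handling the ranges $m<c$ and $m>c$ separately) should collapse this product to the clean closed form $\ell_c(0)=(-1)^{c-1}\,2(r!)^{2}/\bigl[(r-c)!\,(r+c)!\bigr]$, which I would sanity-check at $r=1,2$.

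Squaring, summing, and substituting $j=r-c$ gives $(V_{r-1}^{-1})_{1,1}=4(r!)^{4}\sum_{j=0}^{r-1}1/\bigl[j!\,(2r-j)!\bigr]^{2}=\dfrac{4(r!)^{4}}{[(2r)!]^{2}}\sum_{j=0}^{r-1}\binom{2r}{j}^{2}$. To finish I would invoke the Vandermonde convolution $\sum_{j=0}^{2r}\binom{2r}{j}^{2}=\binom{4r}{2r}$ together with the symmetry $\binom{2r}{j}=\binom{2r}{2r-j}$, which isolates the truncated sum as $\sum_{j=0}^{r-1}\binom{2r}{j}^{2}=\tfrac{1}{2}\bigl[\binom{4r}{2r}-\binom{2r}{r}^{2}\bigr]$. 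Combining this with the identity $(r!)^{4}/[(2r)!]^{2}=1/\binom{2r}{r}^{2}$ yields exactly $2\bigl(\binom{4r}{2r}/\binom{2r}{r}^{2}-1\bigr)$.

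I expect the conceptual crux to be the opening factorization $V_{r-1}=WW^{\mathsf T}$: once the Hankel matrix is recognized as the Gram matrix of a Vandermonde matrix, the appearance of the inverse entry as a sum of squared Lagrange constant terms is forced, and everything downstream is bookkeeping. The main technical obstacle will be the factorial simplification of $\ell_c(0)$ and the symmetry step converting the truncated sum $\sum_{j=0}^{r-1}\binom{2r}{j}^{2}$ into the full central-binomial identity; both are elementary but must be tracked carefully through the signs and the split at $j=r$.
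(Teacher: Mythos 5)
Your argument is correct, and it takes a genuinely different route from the paper's. The paper proves this proposition by cofactor expansion: the $(1,1)$ cofactor of $V_{r-1}$ is the shifted Hankel determinant $H_{r-2}\left(\frac{B_{2k+5}(r+1)}{2k+5}\right)$, which the identity (\ref{eq:SubstitutionVn}) converts into $2\left(H_{r-1}\left(\frac{B_{2k+1}(r+1)}{2k+1}\right)-\det V_{r-1}\right)$, so that the claim reduces to evaluating the ratio $H_{r-1}\left(\frac{B_{2k+1}(r+1)}{2k+1}\right)/\det V_{r-1}=\binom{4r}{2r}/\binom{2r}{r}^{2}$ from the closed forms (\ref{eq:B2k+1}) and (\ref{eq:detVn}); that route therefore rests on the continued-fraction machinery behind the Hankel determinant evaluations. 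You instead push the Gram factorization $V_{r-1}=VS(r)^{\mathsf T}VS(r)$ of (\ref{eq:Vandermonde}) --- which the paper states but exploits only for invertibility --- all the way to the inverse entry, obtaining $(V_{r-1}^{-1})_{1,1}=\sum_{c=1}^{r}\ell_{c}(0)^{2}$ with $\ell_{c}$ the Lagrange basis polynomials at the nodes $c^{2}$. I checked the key computations: the factorial evaluation does give $\ell_{c}(0)=(-1)^{c-1}\,2(r!)^{2}/\bigl[(r-c)!\,(r+c)!\bigr]$ (numerator $(-1)^{r-1}(r!/c)^{2}$ against denominator $(-1)^{r-c}(r-c)!\,(r+c)!/(2c^{2})$), the split of $\sum_{j=0}^{2r}\binom{2r}{j}^{2}=\binom{4r}{2r}$ at $j=r$ is exactly what is needed, and the case $r=2$ gives $\tfrac{16}{9}+\tfrac{1}{9}=\tfrac{17}{9}$, matching $V_{1}^{-1}$ directly. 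What your approach buys is elementarity and self-containedness: it needs none of the Bernoulli-polynomial Hankel determinants, and it supplies in clean closed form the simplification that the paper explicitly omits. What it gives up is scope: the paper's route simultaneously handles $\det V_{n}$ for general $n$ and expresses the cofactor itself as a Hankel determinant of Bernoulli polynomials, whereas the Lagrange argument is tailored to the critical dimension $n=r-1$, the only case where the Vandermonde factor is square and invertible.
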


It is the original purpose of this paper to give a direct computational
proof of Prop.~\ref{prop:Inverse}. Thanks to Dr.~Christian Krattenthaler
for his email, it turns out both Props.~\ref{prop:invertible} and
\ref{prop:Inverse} are direct corollaries of the following expressions. 
\begin{thm}
Let $(a)_{n}=a(a+1)\cdots(a+n-1)$ be the Pochhammer symbol and $B_{n}(x)$
be the $n$th Bernoulli polynomial. 
\begin{equation}
\begin{aligned}H_{n}\left(\frac{B_{2k+5}\left(\frac{x+1}{2}\right)}{2k+5}\right)= & \frac{1}{5\cdot2^{n+2}}\prod_{i=1}^{n}\frac{(2i+3)!^{2}(2i+2)!^{2}}{(4i+5)!(4i+4)!}\prod_{\ell=0}^{n}(x-2n-1+2\ell)_{4n-4\ell+3}\\
 & \times\sum_{i=1}^{n+2}\frac{(2i-1)\left(n+\frac{5}{2}\right)_{i-1}\left(\frac{x}{2}+\frac{1}{2}\right)_{n+2}\left(\frac{x}{2}-n-\frac{3}{2}\right)_{n+2}}{\left(n-i+\frac{5}{2}\right)_{i}(n+2-i)!(n+1+i)!\left(x^{2}-(2i-1)^{2}\right)},
\end{aligned}
\label{eq:Hn2k+5}
\end{equation}
and
\begin{align}
\det V_{n} & =2^{2n^{2}-2n-1}\prod_{i=1}^{n}\frac{(2i)!^{4}}{(4i)!(4i+1)!}\prod_{\ell=0}^{n}(r-\ell)_{2\ell+1}\prod_{\ell=0}^{n-1}\left(r+\frac{1}{2}-\ell\right)_{2\ell+1}\nonumber \\
 & \quad\times\sum_{i=1}^{n+1}\frac{(2n+2i)!(2n+2-2i)!(r+1)_{n+1}}{(n+i)!^{2}(n+1-i)!^{2}(r+i)}.\label{eq:detVn}
\end{align}
\end{thm}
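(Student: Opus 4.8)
The plan is to read both displayed identities as polynomial identities in a single parameter---$x$ in \eqref{eq:Hn2k+5} and $r$ in \eqref{eq:detVn}---and to exploit the Gram/Vandermonde structure flagged in the abstract. First I would record the bridge between the two statements. By Faulhaber's formula $I_{2k}=\sum_{c=1}^{r}c^{2k}=\bigl(B_{2k+1}(r+1)-B_{2k+1}(1)\bigr)/(2k+1)$, so the sequence defining $V_n$ is, up to the single correction at $k=0$ (where $B_1(1)=\tfrac12$), exactly the Bernoulli sequence of \eqref{eq:Hn2k+5} with $\tfrac{x+1}{2}=r+1$, i.e.\ $x=2r+1$, but shifted to begin at $B_5$. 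Concretely, deleting the first row and column of $V_{r-1}$ produces the Hankel matrix of $(I_4,I_6,\dots)$, which is precisely the matrix in \eqref{eq:Hn2k+5} with $n=r-2$ and $x=2r+1$; with Cramer's rule this is what reduces Prop.~\ref{prop:Inverse} to the quotient of \eqref{eq:Hn2k+5} by \eqref{eq:detVn}. I would therefore prove \eqref{eq:Hn2k+5} first and derive \eqref{eq:detVn} (hence Props.~\ref{prop:invertible} and~\ref{prop:Inverse}) from it.

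For the Gram structure, writing $W=(c^{2i})_{0\le i\le n,\,1\le c\le r}$ gives $V_n=WW^{\mathsf T}$, so Cauchy--Binet yields
\[
\det V_n=\sum_{1\le c_0<\dots<c_n\le r}\ \prod_{0\le a<b\le n}\bigl(c_b^2-c_a^2\bigr)^2 .
\]
This already proves Prop.~\ref{prop:invertible} (the sum is empty exactly when $r<n+1$) and exhibits \eqref{eq:detVn}, read through $I_{2k}=B_{2k+1}(r+1)/(2k+1)$, as a polynomial identity in $r$. The same device drives \eqref{eq:Hn2k+5}: at an odd integer $x=2i-1$ one has $\tfrac{x+1}{2}=i$, so $B_{2k+5}(i)/(2k+5)=\sum_{c=1}^{i-1}c^{2k+4}$ and the sequence collapses to the moments of the finitely supported measure $\sum_{c=1}^{i-1}c^4\,\delta_{c^2}$, while even integers $x=2m$ give half-integer arguments $m+\tfrac12$ and, via the multiplication formula for $B_n$, the moments of a measure supported on squares of odd integers.

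The core is then an identification-of-factors computation in the spirit of Krattenthaler. I would first fix degree and parity: since every entry $B_{2(i+j)+5}(\tfrac{x+1}{2})/(2(i+j)+5)$ is an \emph{odd} polynomial in $x$ of degree $2(i+j)+5$, the determinant is a polynomial of degree $(n+1)(2n+5)$ with $H_n(-x)=(-1)^{n+1}H_n(x)$, and a short check shows the right-hand side has the same degree and parity. Next I would locate the zeros: because the entries collapse to moments of an $(i-1)$-point measure at $x=2i-1$ (and an $m$-point measure at $x=2m$), the Hankel matrix drops rank there and the determinant vanishes to the order of the corresponding nullity. Matching these orders shows that $\prod_{\ell=0}^{n}(x-2n-1+2\ell)_{4n-4\ell+3}$ divides the determinant and exhausts its integer zeros, leaving a residual factor of degree $2n+2$.

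The remaining, and hardest, step is to evaluate that residual factor---this is where the non-product sum $\sum_{i=1}^{n+2}(\cdots)$ genuinely enters and where I expect the main difficulty. Dividing the determinant by the big product \emph{and} by $\bigl(\tfrac{x}{2}+\tfrac12\bigr)_{n+2}\bigl(\tfrac{x}{2}-n-\tfrac32\bigr)_{n+2}$ leaves a rational function whose only singularities are simple poles at $x=\pm(2i-1)$, $1\le i\le n+2$: the two Pochhammer factors contribute exactly one simple zero at each such node, while product and determinant vanish there to the same rank-forced order. The residue at $x=2i-1$ is thus the leading Taylor coefficient of the determinant as $\tfrac{x+1}{2}\to i$ divided by that of the product; expanding $B_{2k+5}(i+\epsilon)/(2k+5)=\sum_{c=1}^{i-1}c^{2k+4}+\epsilon\,B_{2k+4}(i)+O(\epsilon^2)$ reduces this coefficient to Cauchy--Binet sums over the nodes $1^2,\dots,(i-1)^2$, which are evaluable as ratios of factorials and Pochhammer symbols. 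Reassembling by partial fractions yields the sum in \eqref{eq:Hn2k+5}, and matching the leading coefficient $\tfrac{1}{5\cdot2^{n+2}}\prod_i\tfrac{(2i+3)!^2(2i+2)!^2}{(4i+5)!(4i+4)!}$ closes the proof. Finally I would deduce \eqref{eq:detVn} either from the decomposition $\sum_{c=1}^{r}\delta_{c^2}=\tfrac12\sum_{c=-r}^{r}\delta_{c^2}-\tfrac12\delta_0$, which writes $\det V_n$ as a classical (symmetric Hahn) Hankel determinant corrected by its $(0,0)$-minor, or by repeating the residue analysis in $r$---the denominators $(r+i)$ in \eqref{eq:detVn} being exactly the simple poles it produces. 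The principal obstacle throughout is the residual sum: the product factors are forced by rank considerations, but the non-factoring remainder must be recovered from its residues, and controlling those residues---equivalently, summing the resulting hypergeometric series in closed form---is the delicate part.
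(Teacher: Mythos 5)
Your route is genuinely different from the paper's. The paper never looks at the zeros of the determinant directly: it derives the J-fraction of $\sum_{k}\frac{B_{2k+1}(\frac{x+1}{2})}{2k+1}z^{2k}$ from Ramanujan's continued fraction for $\psi\left(z+\tfrac{1+x}{2}\right)-\psi\left(z+\tfrac{1-x}{2}\right)$, reads off \eqref{eq:B2k+1} and, via an odd contraction, \eqref{eq:B2k+3}; then \eqref{eq:Hn2k+5} is obtained from the shift lemma (Lem.~\ref{lem:LeftShifted}), which reduces it to showing that an explicit candidate for the tridiagonal determinant $D_n^{(1)}=H_n\big(\tfrac{B_{2k+5}}{2k+5}\big)/H_n\big(\tfrac{B_{2k+3}}{2k+3}\big)$ satisfies the three-term recurrence \eqref{eq:RecD1}; that recurrence is verified by polynomial identity testing at $2n+5$ points, with the last point handled by the WZ method. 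Finally \eqref{eq:detVn} comes from the cofactor identity \eqref{eq:SubstitutionVn}. What you propose is instead the ``identification of factors'' method of \cite{FK1,FK2} --- exactly the technique the paper announces it is working \emph{aside from}. Your skeleton is correct and your bookkeeping checks out: the determinant is odd/even of degree $(n+1)(2n+5)$; at $x=\pm(2i-1)$ the entries become moments of an $(i-1)$-point measure on squares (and at $x=\pm 2j$ of a $j$-point measure on squares of half-odd-integers), forcing vanishing of order at least the corank; summing these orders gives exactly $(n+1)(2n+3)$, the degree of $\prod_{\ell=0}^{n}(x-2n-1+2\ell)_{4n-4\ell+3}$, whose root multiplicities match node by node; the residual is an even polynomial of degree $2n+2$, and the sum in \eqref{eq:Hn2k+5} is precisely its Lagrange interpolation at the $n+2$ nodes $(2i-1)^2$. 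Your Cauchy--Binet factorization $V_n=WW^{\mathsf T}$ is also a genuinely nicer proof of Prop.~\ref{prop:invertible} than the paper's, which for $\det V_r=0$ resorts to exhibiting a kernel vector built from the sequence $T(n,k)$.

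The gap is concentrated where you yourself place it: the evaluation of the residual at the degenerate nodes. For $i=n+2$ the matrix at $x=2n+3$ is the nonsingular moment matrix of an $(n+1)$-point measure and the value is a clean squared Vandermonde; but for each $i\le n+1$ you need the leading Taylor coefficient of the determinant at order $n+2-i$, which is a sum over $(n+2-i)$-subsets of rows of mixed determinants combining rows of the moment matrix $A_0$ with rows of the perturbation $A_1=\big(B_{2(k+j)+4}(i)\big)$. Since $B_{m}(i)=B_{m}+m\sum_{c=1}^{i-1}c^{m-1}$, the rows of $A_1$ are not moments of the same (or of any single) measure, so these are not pure Cauchy--Binet sums; asserting they are ``evaluable as ratios of factorials and Pochhammer symbols'' and that the resulting $n+1$ hypergeometric sums close up into the stated partial-fraction coefficients is where essentially all of the work lies --- it is the counterpart of the paper's entire verification of \eqref{eq:RecD1}, including its WZ step, and of the computations in \cite{FK1,FK2}. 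Until those evaluations are actually carried out (or replaced by enough clean special values, of which only $x=2n+2$ and $x=2n+3$ are readily available --- far fewer than the $n+2$ needed), the argument is a viable program rather than a proof. The same caveat applies to your final step for \eqref{eq:detVn}: the decomposition $\sum_{c=1}^{r}\delta_{c^2}=\tfrac12\sum_{c=-r}^{r}\delta_{c^2}-\tfrac12\delta_0$ is, after a cofactor expansion, the same reduction as \eqref{eq:SubstitutionVn}, and it still requires the closed form of a second Hankel determinant (either \eqref{eq:B2k+1} at $x=2r+1$ or the symmetric Hahn determinant) in addition to \eqref{eq:Hn2k+5}.
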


We shall provide a different proof of (\ref{eq:Hn2k+5}), which directly
leads to (\ref{eq:detVn}), aside from the techniques in \cite{FK1}
and \cite{FK2}. More precisely, besides (\ref{eq:Hn2k+5}), we shall
also prove the following Hankel determinants. 
\begin{prop}
\begin{onehalfspace}
\noindent 
\begin{equation}
H_{n}\left(\frac{B_{2k+1}\left(\frac{x+1}{2}\right)}{2k+1}\right)=\left(\frac{x}{2}\right)^{n+1}\prod_{\ell=1}^{n}\left(\frac{(2\ell)^{2}(2\ell-1)^{2}(x^{2}-(2\ell-1)^{2})(x^{2}-(2\ell)^{2})}{16(4\ell-3)(4\ell-1)^{2}(4\ell+1)}\right)^{n+1-\ell}.\label{eq:B2k+1}
\end{equation}

\noindent 
\begin{align}
H_{n}\left(\frac{B_{2k+1}\left(\frac{x+1}{2}\right)}{2k+1}\right) & =\left(\frac{x^{3}-x}{24}\right)^{n+1}\label{eq:B2k+3}\\
 & \quad\times\prod_{\ell=1}^{n}\left(\frac{(2\ell)^{2}(2\ell+1)^{2}(x^{2}-(2\ell+1)^{2})(x^{2}-(2\ell)^{2})}{16(4\ell-1)(4\ell+1)^{2}(4\ell+3)}\right)^{n+1-\ell}.\nonumber 
\end{align}
\end{onehalfspace}
\end{prop}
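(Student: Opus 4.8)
The plan is to read both identities as instances of the classical evaluation of a Hankel determinant through the three-term recurrence of the associated (formal) orthogonal polynomials. Writing $c_k=B_{2k+1}\!\left(\frac{x+1}{2}\right)/(2k+1)$ and letting $\mathcal{L}$ be the linear functional with moments $\mathcal{L}[u^k]=c_k$, the monic orthogonal polynomials $p_m(u)$ obey $p_{m+1}=(u-\alpha_m)p_m-\beta_m p_{m-1}$, and the standard identity
\[
H_n(\mathbf{c})=c_0^{\,n+1}\prod_{\ell=1}^{n}\beta_\ell^{\,n+1-\ell}
\]
holds whenever the relevant Hankel minors do not vanish. Since the right-hand side of (\ref{eq:B2k+1}) is already displayed in exactly this shape, with $c_0=x/2$ and $\beta_\ell=\frac{(2\ell)^2(2\ell-1)^2(x^2-(2\ell-1)^2)(x^2-(2\ell)^2)}{16(4\ell-3)(4\ell-1)^2(4\ell+1)}$, the entire content of the first identity is the determination of these $\beta_\ell$; likewise for (\ref{eq:B2k+3}). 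A preliminary check that $\beta_1=(x^2-1)(x^2-4)/180$ agrees with $(c_0c_2-c_1^2)/c_0^2$ confirms the indexing.

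The computation of the $\beta_\ell$ rests on a closed form for the moment generating function. Starting from $\sum_m B_m(y)\,t^m/m!=t e^{yt}/(e^t-1)$ with $y=(x+1)/2$ and extracting the part odd in $x$, one obtains $\sum_{k\ge0}c_k\,t^{2k}/(2k)!=\frac{\sinh(xt/2)}{2\sinh(t/2)}=:g(t)$; in particular the $c_k$ are even moments, each odd in $x$, matching the factored answers. This $g$ is a deformation by the parameter $x$ of the function behind Lambert's continued fraction for $\tanh$, and its orthogonal polynomials should be identifiable with a Wilson/Racah-type family symmetric under $x\mapsto-x$, for which $(x^2-(2\ell-1)^2)$ and $(x^2-(2\ell)^2)$ are precisely the contiguous-parameter factors. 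This identification is corroborated by the specialization $x=2r+1$, where these factors turn into products of the form $(2r+1\mp m)(2r+1\pm m)$, i.e.\ exactly the differences $b^2-a^2$ that make $\det V_{r-1}$ a squared Vandermonde. Either reading $\alpha_\ell,\beta_\ell$ off the Askey-scheme recurrence, or guessing the monic $p_m$ and the $\beta_\ell$ from the stated answer and verifying them, then yields (\ref{eq:B2k+1}).

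For (\ref{eq:B2k+3}) I note that its moment sequence is the once-shifted one: the $t^2$-coefficient of $g$ gives $c_1=(x^3-x)/24$, which is the leading factor of (\ref{eq:B2k+3}), and more generally $B_{2k+3}\!\left(\frac{x+1}{2}\right)/(2k+3)=\mathcal{L}[u^{k+1}]$. Hence (\ref{eq:B2k+3}) is the Hankel determinant of the Christoffel transform of $\mathcal{L}$ by $u$, and the identical method returns the shifted coefficients $\beta'_\ell=\frac{(2\ell)^2(2\ell+1)^2(x^2-(2\ell+1)^2)(x^2-(2\ell)^2)}{16(4\ell-1)(4\ell+1)^2(4\ell+3)}$, whence the product formula gives the second identity. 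Equivalently, one may pass from $\beta_\ell$ to $\beta'_\ell$ through the standard relation between a Hankel determinant and its first shift.

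The principal difficulty is the rigorous determination of the $\beta_\ell$. For general $x$ the functional $\mathcal{L}$ is not positive, so the positivity arguments of the usual orthogonal-polynomial construction are unavailable; instead I would work with formal orthogonality, showing the relevant Hankel minors are nonzero as polynomials in $x$ and proving the three-term recurrence by induction on $\ell$, with the relations $\mathcal{L}[p_\ell\,u^j]=0$ for $j<\ell$ evaluated through $g(t)$ and the second-order relation $\sinh(t/2)\,g''+\cosh(t/2)\,g'=\frac{x^2-1}{4}\sinh(t/2)\,g$ that it satisfies. The bookkeeping of matching the resulting rational functions to the displayed products---and checking that the $\alpha_\ell$, which drop out of the determinant, remain consistent---is where the real work lies; the factored shape of $\beta_\ell$ and the squared-Vandermonde value at $x=2r+1$ supply both the guesses and the partial checks.
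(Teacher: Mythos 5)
Your framework is the right one and coincides with the paper's: both identities are read off from the classical relation $H_{n}(\mathbf{c})=c_{0}^{n+1}t_{1}^{n}t_{2}^{n-1}\cdots t_{n}$ between a Hankel determinant and the coefficients of the J-fraction (equivalently, the three-term recurrence of the formal orthogonal polynomials), and your checks of $c_{0}=x/2$, $c_{1}=(x^{3}-x)/24$ and $\beta_{1}=(x^{2}-1)(x^{2}-4)/180$ are all correct, as is the observation that (\ref{eq:B2k+3}) is the once-shifted moment sequence. But there is a genuine gap: the entire content of the theorem is the determination of \emph{all} the recurrence coefficients $\beta_{\ell}$, and at that point your argument forks into ``read them off an (unidentified) Wilson/Racah-type family'' or ``guess the monic $p_{m}$ and verify'' --- neither the family, nor the polynomials, nor the verification is supplied. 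The second-order relation $\sinh(t/2)g''+\cosh(t/2)g'=\frac{x^{2}-1}{4}\sinh(t/2)g$ (which you state correctly) yields moment relations by convolution with the coefficients of $\sinh$ and $\cosh$, not a three-term relation, so it does not by itself pin down the $\beta_{\ell}$; as written, the proposal defers exactly the step where the proof lives, and you say as much in your final paragraph.

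The ingredient that closes this gap in the paper is a specific classical continued fraction. Writing the ordinary (not exponential) generating function as a difference of digamma values, $F(z)=\frac{1}{2z}\bigl(\psi(\tfrac{1}{z}+\tfrac{1+x}{2})-\psi(\tfrac{1}{z}+\tfrac{1-x}{2})\bigr)$, and expanding $\psi$ by its partial-fraction series, one lands on Ramanujan's continued fraction (Berndt, \emph{Ramanujan's Notebooks II}, p.~149, Entry 30) for $\sum_{k\geq0}\bigl(\frac{1}{t-n+2k+1}-\frac{1}{t+n+2k+1}\bigr)$. An equivalence transformation turns this into the S-fraction with partial numerators $\alpha_{m}=\frac{m^{2}(x^{2}-m^{2})}{4(2m-1)(2m+1)}$, and then the even canonical contraction gives $t_{\ell}=\alpha_{2\ell-1}\alpha_{2\ell}$ (which is your $\beta_{\ell}$ and yields (\ref{eq:B2k+1})), while the odd canonical contraction applied to the same S-fraction gives $t_{\ell}=\alpha_{2\ell}\alpha_{2\ell+1}$ and hence (\ref{eq:B2k+3})) with constant term $\frac{x}{2}\alpha_{1}=\frac{x^{3}-x}{24}$. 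If you prefer to stay with your Christoffel-transform route for the second identity, note that the ``standard relation to the first shift'' produces a tridiagonal determinant $D_{n}$ that must itself be evaluated in closed product form, which is additional work the contraction avoids. To make your proposal into a proof you would need either to locate the orthogonal polynomials explicitly in the Askey scheme (they are a Hahn-type family attached to the discrete measure visible in the partial-fraction expansion above) or to import the Ramanujan/Stieltjes S-fraction; without one of these, the $\beta_{\ell}$ remain conjectural for $\ell\geq2$.
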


Note that these three sequences are different from the recent work
\cite{DJ1,DJ2,DJ3}, by Dilcher and the first author, on the Hankel
determinants of sequences related to Bernoulli and Euler polynomial.
Although the Hankel determinant of $H_{n}\left(B_{2k+1}(\tfrac{x+1}{2})\right)$
is obtained in \cite[Thm.~1.1]{DJ1}, the expression of $H_{n}\left(\frac{B_{2k+1}\left(\frac{x+1}{2}\right)}{2k+1}\right)$
is quite different, which, in general, is true for $H_{n}(a_{k})$,
$H_{n}(a_{k}/k)$, and $H_{n}(ka_{k-1})$. Take the the Euler numbers
$E_{k}$ as an example: we have \cite[Eq.~(4.2)]{Euler}
\[
H_{n}(E_{k})=(-1)^{\binom{n+1}{2}}\prod_{\ell=1}^{n}\ell!^{2},
\]
and \cite[Cor.~3.4]{DJ2},
\[
H_{n}\left(kE_{k-1}\right)=\begin{cases}
0, & k=2m;\\
(-1)^{m+1}2^{4m(m+1)}\overset{m}{\underset{\ell=1}{\prod}}\ell!^{8}, & k=2m+1.
\end{cases}
\]
Namely, the latter also depends on the parity of the dimension. 

This paper is structured as follows. In Section \ref{sec:Preliminaries},
we first quote important results in Bernoulli polynomials, orthogonal
polynomials, continued fractions, and polygamma functions, required
in later sections. In Section \ref{sec:HD}, we give the proofs of
three Hankel determinants (\ref{eq:B2k+1}), (\ref{eq:B2k+3}), and
(\ref{eq:Hn2k+5}). In Section \ref{sec:Vk}, besides the proof of
(\ref{eq:detVn}) some further results on $V_{k}$ are given, including
alternative proofs of Props.~\ref{prop:invertible} and \ref{prop:Inverse},
rather than direct corollaries from (\ref{eq:detVn}). This approach
leads an identity involving Stirling numbers, stated and proven finally
in Section \ref{sec:FinalRemarks}, together with some further remarks. 

\section{\label{sec:Preliminaries}Preliminaries}

All the necessary background stated here in this section can be found
in \cite{DJ1,DJ2,DJ3}, in concise form. We repeat this material here
for easy reference, and to make this paper self-contained.

\subsection{Bernoulli numbers, Bernoulli polynomials and connection with $V_{n}$}

The Bernoulli numbers $B_{n}$ and Bernoulli polynomials $B_{n}(x)$
are usually defined by their exponential generating functions 

\[
\frac{te^{tx}}{e^{t}-1}=\sum_{n=0}^{\infty}B_{n}(x)\frac{t^{n}}{n!}\quad\text{and}\quad\frac{te^{tx}}{e^{t}-1}=\sum_{n=0}^{\infty}B_{n}(x)\frac{t^{n}}{n!}.
\]
Their application and connections include number theory, combinatorics,
numerical analysis, and other areas. The direct connection between
$B_{n}(x)$ and $I_{k}$ can be derived from 
\[
B_{n+1}(x+1)-B_{n+1}(x)=(n+1)x^{n};
\]
(see e.g., \cite[Entry (24.4.1)]{NIST}), so that
\[
I_{k}=\frac{B_{k+1}(r+1)-B_{k+1}(1)}{k+1}.
\]
Note that $B_{2k+1}(1)=0$ (see e.g., \cite[Entries (24.2.4), (24.4.3)]{NIST})
for all positive integers $k$. Hence, 
\begin{equation}
I_{2k}=\begin{cases}
\frac{B_{2k+1}(r+1)}{2k+1}, & k>0;\\
B_{1}(r+1)-B_{1}(1)=r, & k=0.
\end{cases}\label{eq:IandB}
\end{equation}
Namely, $\det V_{n}$ is almost the same as $H_{n-1}\left(\frac{B_{2k+1}(r+1)}{2k+1}\right)$,
(please note the difference of the dimensions) except for $I_{0}=r$
and $B_{1}(r+1)=r+1/2$. 

\subsection{Orthogonal polynomials}

Suppose we are given a sequence ${\bf c}=(c_{0},c_{1},\ldots)$ of
numbers; then we can define a linear functional $L$ on polynomials
by 
\begin{equation}
L(x^{k})=c_{k},\quad k=0,1,2,\ldots.\label{eq:Operator}
\end{equation}
We may also normalize the sequence such that $c_{0}=1$. We now summarize
several well-known facts and state them as a lemma with two corollaries;
see, e.g., \cite[Ch.~2]{Is} and \cite[pp.~7--10]{Chi}.
\begin{lem}
\label{lem:Orthogonality1} Let $L$ be the linear functional in (\ref{eq:Operator}).
If (and only if) $H_{n}(c_{k})\neq0$ for all $n=0,1,2,\ldots$, there
exists a unique sequence of monic polynomials $P_{n}(y)$ of degree
$n$, $n=0,1,\ldots$, and a sequence of positive numbers $(\zeta_{n})_{n\geq1}$,
with $\zeta_{0}=1$, such that 
\begin{equation}
L\left(P_{m}(y)P_{n}(y)\right)=\zeta_{n}\delta_{m,n},\label{eq:Orthogonality1}
\end{equation}
where $\delta_{m,n}$ is the Kronecker delta function. Furthermore,
for all $n\geq1$ we have $\zeta_{n}=H_{n}({\bf c})/H_{n-1}({\bf c})$,
and for $n\geq1$, 
\begin{equation}
P_{n}(y)=\frac{1}{H_{n-1}({\bf c})}\det\begin{pmatrix}c_{0} & c_{1} & \cdots & c_{n}\\
c_{1} & c_{2} & \cdots & c_{n+1}\\
\vdots & \vdots & \ddots & \vdots\\
c_{n-1} & c_{n} & \cdots & c_{2n-1}\\
1 & y & \cdots & y^{n}
\end{pmatrix},\label{eq:OrthPolyDet}
\end{equation}
where the polynomials $P_{n}(y)$ satisfy the $3$-term recurrence
relation $P_{0}(y)=1$, $P_{1}(y)=y+s_{0}$, and 
\begin{equation}
P_{n+1}(y)=(y+s_{n})P_{n}(y)-t_{n}P_{n-1}(y)\qquad(n\geq1),\label{eq:3TermRec}
\end{equation}
for some sequences $(s_{n})_{n\geq0}$ and $(t_{n})_{n\geq1}$.
\end{lem}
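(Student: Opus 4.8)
The plan is to recognize this as the classical construction of monic orthogonal polynomials attached to a moment functional, with the nonvanishing of the Hankel determinants $H_{n}({\bf c})$ serving as the sole engine for both existence and uniqueness. I would split the argument into four steps: existence and uniqueness of $P_{n}$; the determinant formula (\ref{eq:OrthPolyDet}); the evaluation $\zeta_{n}=H_{n}({\bf c})/H_{n-1}({\bf c})$; and the three-term recurrence (\ref{eq:3TermRec}). For existence and uniqueness, I would write $P_{n}(y)=y^{n}+a_{n-1}y^{n-1}+\cdots+a_{0}$ and note that, because $\{1,y,\ldots,y^{n-1}\}$ spans the polynomials of degree less than $n$, the orthogonality relation (\ref{eq:Orthogonality1}) with $m<n$ is equivalent to the $n$ conditions $L(y^{j}P_{n}(y))=0$, $j=0,\ldots,n-1$. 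Expanding by linearity through $L(x^{k})=c_{k}$ turns these into a linear system for $(a_{0},\ldots,a_{n-1})$ whose coefficient matrix is $(c_{i+j})_{0\le i,j\le n-1}$, of determinant $H_{n-1}({\bf c})$. Since $H_{n-1}\neq0$, the system has a unique solution, giving a unique monic $P_{n}$.

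Next I would verify the determinant formula (\ref{eq:OrthPolyDet}) directly. Laplace expansion along its final row $(1,y,\ldots,y^{n})$ shows the cofactor of $y^{n}$ is $H_{n-1}({\bf c})$, so the prefactor $1/H_{n-1}$ renders the right-hand side monic of degree $n$. For orthogonality, applying $L$ to $y^{j}P_{n}(y)$ with $j<n$ amounts to replacing that last row by $(c_{j},c_{j+1},\ldots,c_{j+n})$, which then duplicates row $j+1$ and forces the determinant to vanish. By the uniqueness just established, the displayed determinant is $P_{n}$.

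The value of $\zeta_{n}$ follows from the same mechanism: orthogonality to lower degrees and monicity give $\zeta_{n}=L(P_{n}^{2})=L(y^{n}P_{n}(y))$, and applying $L$ to the last row with $j=n$ rebuilds the full Hankel matrix of order $n$, whence $\zeta_{n}=H_{n}({\bf c})/H_{n-1}({\bf c})$. This also settles the converse half of the ``iff'': expanding the moment matrix in the orthogonal basis produces a factorization $(c_{i+j})_{0\le i,j\le n}=V\,\mathrm{diag}(\zeta_{0},\ldots,\zeta_{n})\,V^{T}$ with $V$ unitriangular, so that $H_{n}({\bf c})=\prod_{k=0}^{n}\zeta_{k}$; if the $P_{k}$ and nonzero $\zeta_{k}$ exist, every $H_{n}$ is nonzero.

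For the recurrence, I would observe that $P_{n+1}(y)-yP_{n}(y)$ has degree at most $n$, expand it as $\sum_{k=0}^{n}\lambda_{k}P_{k}(y)$, and pair with $P_{j}$ under $L$. Orthogonality annihilates $L(P_{n+1}P_{j})$ for $j\le n$ and, since $yP_{j}$ has degree $j+1<n$ for $j<n-1$, also annihilates $L(yP_{n}P_{j})$, forcing $\lambda_{j}=0$ there; this leaves $P_{n+1}=(y+s_{n})P_{n}-t_{n}P_{n-1}$, and a short computation using $yP_{n-1}=P_{n}+(\text{lower order})$ gives $t_{n}=\zeta_{n}/\zeta_{n-1}$. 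The one genuine subtlety I anticipate is the \emph{positivity} of $\zeta_{n}$: the hypothesis $H_{n}\neq0$ by itself only yields $\zeta_{n}\neq0$, whereas $\zeta_{n}>0$ (equivalently $t_{n}>0$) requires the stronger positive-definiteness $H_{n}>0$. I would therefore either record positivity under that additional standing assumption or invoke that the moments here arise from a positive measure, noting that all remaining assertions of the lemma hold verbatim under the weaker hypothesis $H_{n}\neq0$.
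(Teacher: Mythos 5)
Your proof is correct, but note that the paper does not actually prove this lemma: it is stated as a summary of well-known facts with pointers to Ismail and Chihara, so there is no in-paper argument to compare against. Your four-step argument --- solving the linear system whose coefficient matrix is $(c_{i+j})_{0\le i,j\le n-1}$ with determinant $H_{n-1}(\mathbf{c})$, verifying (\ref{eq:OrthPolyDet}) by replacing the last row and spotting the duplicated row, computing $\zeta_{n}=L(y^{n}P_{n}(y))=H_{n}(\mathbf{c})/H_{n-1}(\mathbf{c})$, and obtaining (\ref{eq:3TermRec}) by expanding $P_{n+1}-yP_{n}$ in the basis $\{P_{k}\}$ --- is exactly the standard proof in those references, and your unitriangular factorization $(c_{i+j})=V^{-1}\mathrm{diag}(\zeta_{0},\ldots,\zeta_{n})V^{-T}$ cleanly disposes of the converse direction via $H_{n}=\prod_{k=0}^{n}\zeta_{k}$. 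Your closing caveat is also well taken and worth flagging as a genuine imprecision in the statement as printed: the hypothesis $H_{n}(\mathbf{c})\neq0$ yields only $\zeta_{n}\neq0$, not $\zeta_{n}>0$, and indeed in this paper the $c_{k}$ are polynomials in $x$, so positivity is neither available nor needed; everything the authors later use (formal orthogonality, the ratio formula for $\zeta_{n}$, and the existence of the sequences $(s_{n})$ and $(t_{n})$) survives under the weaker hypothesis. The only step where I would ask you to be explicit is in the recurrence: concluding $\lambda_{j}=0$ from $\lambda_{j}\zeta_{j}=0$ uses $\zeta_{j}\neq0$, which is precisely what the Hankel nonvanishing supplies.
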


We now multiply both sides of (\ref{eq:OrthPolyDet}) by $y^{r}$
and replace $y^{j}$ by $c_{j}$, which includes replacing the constant
term $1$ by $c_{0}$ for $r=0$. Then for $0\leq r\leq n-1$ the
last row of the matrix in (\ref{eq:OrthPolyDet}) is identical with
one of the previous rows, and thus the determinant is $0$. When $r=n$,
the determinant is $H_{n}({\bf c})$. We therefore have the following
result.
\begin{cor}
\label{cor:Orthgonality2} With the sequence $(c_{k})$ and the polynomials
$P_{n}(y)$ as above, we have 
\begin{equation}
y^{r}P_{n}(y)\bigg|_{y^{k}=c_{k}}=\begin{cases}
0, & 0\leq r\leq n-1;\\
H_{n}({\bf c})/H_{n-1}({\bf c}), & r=n.
\end{cases}\label{eq:Orthgonality2}
\end{equation}
\end{cor}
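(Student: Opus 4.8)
The plan is to recognize that the substitution $\big|_{y^k=c_k}$ applied to any polynomial in $y$ is exactly the linear functional $L$ of (\ref{eq:Operator}), and then to exploit the determinantal formula (\ref{eq:OrthPolyDet}) for $P_n$ directly, without invoking the three-term recurrence. First I would observe that in (\ref{eq:OrthPolyDet}) the variable $y$ enters only through the final row $(1,y,\ldots,y^n)$, so a cofactor expansion along that row gives $P_n(y)=\frac{1}{H_{n-1}(\mathbf{c})}\sum_{j=0}^{n}y^{j}C_{j}$, where $C_j$ is the cofactor of the $(n+1,j+1)$ entry and is independent of $y$.

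Next I would multiply by $y^r$ and apply the substitution: since the operation is linear in the monomials, it sends $y^r P_n(y)$ to $\frac{1}{H_{n-1}(\mathbf{c})}\sum_{j=0}^{n} c_{r+j}C_{j}$. The key point is that this sum is again a cofactor expansion: it equals $\frac{1}{H_{n-1}(\mathbf{c})}\det N_r$, where $N_r$ is obtained from the matrix in (\ref{eq:OrthPolyDet}) by replacing its last row $(1,y,\ldots,y^n)$ with $(c_r,c_{r+1},\ldots,c_{r+n})$. This reduces the whole claim to reading off $\det N_r$.

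Finally I would split into the two cases. For $0\le r\le n-1$ the new last row $(c_r,\ldots,c_{r+n})$ coincides with the $(r+1)$-st row of $N_r$, namely $(c_{r},c_{r+1},\ldots,c_{r+n})$, so $N_r$ has two equal rows and $\det N_r=0$. For $r=n$ the last row becomes $(c_n,c_{n+1},\ldots,c_{2n})$, whereupon $N_n$ is precisely the $(n+1)$-dimensional Hankel matrix of (\ref{eq:DEFHankelDet}); hence $\det N_n=H_n(\mathbf{c})$ and the value is $H_n(\mathbf{c})/H_{n-1}(\mathbf{c})$, as stated. I expect no serious obstacle here: the only point requiring care is justifying that multiplying by $y^r$ and then substituting $y^k\mapsto c_k$ commutes with the cofactor expansion, i.e.\ that it genuinely amounts to inserting the row $(c_r,\ldots,c_{r+n})$. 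As a cross-check, one could instead argue via orthogonality: writing $y^r$ in the basis $P_0,\ldots,P_r$ and using (\ref{eq:Orthogonality1}) gives $L(y^r P_n)=0$ for $r<n$, while for $r=n$ the monic leading term yields $L(y^n P_n)=L(P_n^2)=\zeta_n=H_n(\mathbf{c})/H_{n-1}(\mathbf{c})$, recovering the same result and confirming consistency with $\zeta_n=H_n(\mathbf{c})/H_{n-1}(\mathbf{c})$ from Lemma \ref{lem:Orthogonality1}.
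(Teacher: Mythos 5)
Your proof is correct and is essentially the paper's own argument: the paper likewise multiplies (\ref{eq:OrthPolyDet}) by $y^{r}$, replaces $y^{j}$ by $c_{j}$ so that the last row becomes $(c_{r},\ldots,c_{r+n})$, and then observes that this row duplicates an earlier row when $0\leq r\leq n-1$ and turns the matrix into the Hankel matrix when $r=n$. Your cofactor-expansion justification of why the substitution amounts to inserting that row, and the orthogonality cross-check, are just more explicit versions of what the paper leaves implicit.
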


The polynomials $P_{n}(y)$ are known as ``the monic orthogonal polynomials
belonging to the sequence ${\bf c}=(c_{0},c_{1},\ldots)$'', or ``the
polynomials orthogonal with respect to ${\bf c}$''. 

The next result which we require establishes a connection with certain
continued fractions (in this case called \emph{J-fractions}). It can
be found in various relevant publications, for instance in \cite[p.~20]{Kr1}. 
\begin{lem}
\label{lem:HankelDetCF}Let ${\bf c}=(c_{k})_{k\geq0}$ be a sequence
of numbers with $c_{0}\neq0$, and suppose that its generating function
is written in the form 
\[
\sum_{k=0}^{\infty}c_{k}z^{k}=\frac{c_{0}}{1+s_{0}z-\frac{t_{1}z^{2}}{1+s_{1}z-\frac{t_{2}z^{2}}{1+s_{2}z-\ddots}}}
\]
 where both sides are considered as formal power series. Then the
sequences $(s_{n})$ and $(t_{n})$ are the same as in (\ref{eq:3TermRec});
and we have

\begin{equation}
H_{n}({\bf c})=c_{0}^{n+1}t_{1}^{n}t_{2}^{n-1}\cdots t_{n-1}^{2}t_{n}\qquad(n\geq0).\label{eq:HankelDetTn}
\end{equation}
\end{lem}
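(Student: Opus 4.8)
The plan is to prove the two assertions of the lemma separately, leaning throughout on the orthogonal-polynomial machinery of Lemma~\ref{lem:Orthogonality1} and Corollary~\ref{cor:Orthgonality2}; this is the classical correspondence between Hankel determinants and $J$-fractions, so I would aim for a self-contained argument consistent with the framework just set up. First I would dispose of the normalization. Replacing $\mathbf{c}$ by $\tilde{\mathbf{c}}=(c_k/c_0)_{k\geq0}$ scales every entry of the $(n+1)\times(n+1)$ Hankel matrix by $1/c_0$, so $H_n(\tilde{\mathbf{c}})=H_n(\mathbf{c})/c_0^{\,n+1}$, while dividing the generating function by $c_0$ merely strips the factor $c_0$ from the numerator of the continued fraction and leaves every $s_n$ and $t_n$ unchanged. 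Hence it suffices to treat $c_0=1$ and multiply back by $c_0^{\,n+1}$ at the end, which is precisely where the factor $c_0^{\,n+1}$ in (\ref{eq:HankelDetTn}) originates.

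For the identification of the continued-fraction coefficients with the recurrence coefficients of (\ref{eq:3TermRec}), I would invoke the standard theory of convergents of a $J$-fraction. Writing the moment generating function formally as $L_y\bigl(1/(1-yz)\bigr)=\sum_k c_k z^k$, the denominator polynomials $B_n(z)$ of the successive convergents satisfy $B_n(z)=(1+s_{n-1}z)B_{n-1}(z)-t_{n-1}z^2 B_{n-2}(z)$; one checks that $B_n(z)=z^n P_n(1/z)$, the reciprocal of the monic orthogonal polynomial, so that this recurrence is exactly (\ref{eq:3TermRec}) read off at $y=1/z$. Matching the convergent against $\sum_k c_k z^k$ to the appropriate order then forces the $s_n,t_n$ of the $J$-fraction to coincide with those of the three-term recurrence. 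Pinning the continued fraction to the orthogonal polynomials in this way is the one genuine obstacle; once it is in place, everything else is bookkeeping.

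Granting that identification, the determinant formula follows by a short telescoping argument. Multiplying (\ref{eq:3TermRec}) by $P_{n-1}$ and applying $L$ annihilates $L(P_{n+1}P_{n-1})$ and $L(P_nP_{n-1})$ by orthogonality, leaving $t_n\zeta_{n-1}=L(yP_nP_{n-1})$; since $yP_{n-1}$ is monic of degree $n$ it equals $P_n$ plus a combination of lower-degree orthogonal polynomials, so $L(yP_nP_{n-1})=L(P_n^2)=\zeta_n$, whence $t_n=\zeta_n/\zeta_{n-1}$. Substituting $\zeta_j=H_j(\mathbf{c})/H_{j-1}(\mathbf{c})$ from Lemma~\ref{lem:Orthogonality1} (with $c_0=1$, so $\zeta_0=H_0=1$) and tallying the exponent of each $\zeta_j$ in $\prod_{k=1}^n t_k^{\,n+1-k}$ shows that every $\zeta_j$ with $1\leq j\leq n$ occurs to the first power, giving $\prod_{k=1}^n t_k^{\,n+1-k}=\prod_{j=1}^n\zeta_j=H_n(\mathbf{c})/H_0(\mathbf{c})=H_n(\mathbf{c})$. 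Restoring the normalization multiplies the right-hand side by $c_0^{\,n+1}$ and yields (\ref{eq:HankelDetTn}). Finally I would verify the boundary cases $n=0$, where both sides equal $c_0$, and $n=1$ directly as a consistency check.
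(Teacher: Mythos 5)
The paper does not prove this lemma at all: it is quoted as a known result, with a pointer to \cite[p.~20]{Kr1}, so there is no internal proof to compare against. Your argument is the classical one and is essentially correct. The normalization reduction to $c_0=1$, the identity $t_n=\zeta_n/\zeta_{n-1}$ obtained by multiplying \eqref{eq:3TermRec} by $P_{n-1}$ and applying $L$, and the telescoping $\prod_{k=1}^n t_k^{\,n+1-k}=\prod_{j=1}^n \zeta_j=H_n(\mathbf{c})$ are all sound, and they do deliver \eqref{eq:HankelDetTn} once the first assertion is in hand. The one soft spot is exactly where you flag it: the identification of the $J$-fraction coefficients with the recurrence coefficients. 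You correctly note that the convergent denominators satisfy $B_n(z)=z^nP_n(1/z)$ with the reciprocal of recurrence \eqref{eq:3TermRec}, but the step ``matching the convergent against $\sum_k c_kz^k$ to the appropriate order forces the $s_n,t_n$ to coincide'' is invoked as standard theory rather than proved; making it precise requires showing that the $n$th convergent agrees with the series through order $z^{2n-1}$ and that this Pad\'e-type property, together with the normalization $B_n(0)=1$, determines the denominators uniquely. That correspondence \emph{is} the first assertion of the lemma, so strictly you have reduced it to another citation-level fact rather than derived it from the paper's Lemma \ref{lem:Orthogonality1} alone. Since the paper itself treats the entire statement as citable, this is a reasonable level of rigor, but if you want a self-contained proof you should supply the order-of-contact argument (e.g., via $A_nB_{n-1}-A_{n-1}B_n=\pm z^{2n-2}\prod_{i\le n-1}t_i$ and the vanishing moments $L(y^rP_n)=0$ for $r<n$ from Corollary \ref{cor:Orthgonality2}).
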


Following \cite{MWY}, we consider the infinite band matrix 
\begin{equation}
J:=\begin{pmatrix}-s_{0} & 1 & 0 & 0 & \cdots\\
t_{1} & -s_{1} & 1 & 0 & \cdots\\
0 & t_{2} & -s_{2} & 1 & \cdots\\
\vdots & \vdots & \ddots & \ddots & \ddots
\end{pmatrix}.\label{eq:J}
\end{equation}
Furthermore, for each $n\geq0$ let $J_{n}$ be the $(n+1)$th leading
principal submatrix of $J$ and let 
\begin{equation}
D_{n}:=\det J_{n},\label{eq:Dn}
\end{equation}
so that $D_{0}=-s_{0}$. We also set $D_{-1}=1$ by convention, and
furthermore, using elementary determinant operations, we get from
(\ref{eq:J}) the recurrence relation 
\begin{equation}
D_{n+1}=-s_{n+1}D_{n}-t_{n+1}D_{n-1}.\label{eq:RecurrrenceDn}
\end{equation}
We can now quote the following results.
\begin{lem}
\label{lem:LeftShifted} \cite[Prop.~1.2]{MWY} With notation as above,
for a given sequence ${\bf c}$ we have 
\begin{equation}
H_{n}(c_{k+1})=H_{n}(c_{k})\cdot D_{n},\label{eq:HankelDetLeftShifted1}
\end{equation}
and 
\begin{equation}
H_{n}(c_{k+2})=H_{n}(c_{k})\cdot\left(\prod_{\ell=1}^{n+1}t_{\ell}\right)\cdot\sum_{\ell=-1}^{n}\frac{D_{\ell}^{2}}{\prod_{j=1}^{\ell+1}t_{j}}.\label{eq:eq:HankelDetLeftShifted2}
\end{equation}
\end{lem}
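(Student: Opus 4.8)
The plan is to realize each moment as a corner entry of a power of the tridiagonal matrix $J$ in \eqref{eq:J}, and then to factor the three Hankel matrices $(c_{i+j})$, $(c_{i+j+1})$, $(c_{i+j+2})$ through $J$. Normalizing $c_0=1$ (harmless, since both identities are homogeneous in $\mathbf c$), I would first record the standard fact that $c_k=(J^k)_{0,0}$ for all $k\ge0$: the three-term recurrence \eqref{eq:3TermRec} says that multiplication by $y$ acts on the monic orthogonal basis $(P_n)$ as the matrix $J^{\top}$, while $L$ reads off the $P_0$-coefficient because $L(P_m)=\zeta_0\delta_{m,0}=\delta_{m,0}$; hence $c_k=L(y^k)=\big((J^{\top})^k e_0\big)_0=(J^k)_{0,0}$, where $e_0$ is the first standard basis vector. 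I then introduce the Krylov vectors $v_j:=J^je_0$ and $w_i:=(J^{\top})^ie_0$. Since $J$ has subdiagonal $(t_\ell)$ and superdiagonal $1$, an immediate induction shows that $v_j$ is supported on coordinates $0,\dots,j$ with top entry $(v_j)_j=\prod_{\ell=1}^{j}t_\ell$, and $w_i$ is supported on $0,\dots,i$ with $(w_i)_i=1$. Assembling $W_n=[\,w_0\mid\cdots\mid w_n\,]$ and $V_n=[\,v_0\mid\cdots\mid v_n\,]$, both $(n+1)\times(n+1)$ upper triangular, the identity $c_{i+j}=e_0^{\top}J^{i+j}e_0=w_i^{\top}v_j$ gives $(c_{i+j})_{0\le i,j\le n}=W_n^{\top}V_n$, and taking determinants recovers \eqref{eq:HankelDetTn} since $\det W_n=1$ and $\det V_n=\prod_{\ell=1}^{n}t_\ell^{\,n-\ell+1}=H_n(\mathbf c)$.

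For the single shift I would write $c_{i+j+1}=e_0^{\top}J^{i+j+1}e_0=w_i^{\top}Jv_j$. Because $J$ is tridiagonal and the supports of $w_i$ and $v_j$ lie in $\{0,\dots,n\}$, only the leading principal section $J_n$ enters, and there is no truncation error: $(c_{i+j+1})_{0\le i,j\le n}=W_n^{\top}J_nV_n$. Taking determinants with $\det J_n=D_n$ (see \eqref{eq:Dn}) gives $H_n(c_{k+1})=\det W_n\cdot D_n\cdot\det V_n=D_n\,H_n(\mathbf c)$, which is \eqref{eq:HankelDetLeftShifted1}.

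The double shift is where the real work lies and is the step I expect to be the main obstacle. Here $c_{i+j+2}=w_i^{\top}J^2v_j$, but the intermediate index in $J\cdot J$ can reach $n+1$, so $J^2$ restricted to coordinates $\le n$ differs from $J_n^2$. Comparing the two, the difference is supported only in the bottom-right corner and equals $J_{n,n+1}J_{n+1,n}=t_{n+1}$; together with the top-entry formulas for $v_n$ and $w_n$ this yields the rank-one correction
\[
(c_{i+j+2})_{0\le i,j\le n}=W_n^{\top}J_n^{2}V_n+\Big(\prod_{\ell=1}^{n+1}t_\ell\Big)e_ne_n^{\top}.
\]
Applying the adjugate form of the matrix determinant lemma, $\det(A+uv^{\top})=\det A+v^{\top}\mathrm{adj}(A)u$, to $A=W_n^{\top}J_n^{2}V_n$ gives $\det A=D_n^{2}H_n(\mathbf c)$, while the correction is $\prod_{\ell=1}^{n+1}t_\ell$ times the $(n,n)$ cofactor of $A$, i.e.\ the minor obtained by deleting the last row and column. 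The crucial observation is that this minor, after the same intermediate-index analysis carried out one dimension lower, is exactly the double-shifted Hankel matrix of size $n$; hence it equals $H_{n-1}(c_{k+2})$, and I obtain the recurrence
\[
H_n(c_{k+2})=D_n^{2}\,H_n(\mathbf c)+\Big(\prod_{\ell=1}^{n+1}t_\ell\Big)H_{n-1}(c_{k+2}).
\]

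Finally I would solve this recurrence. Setting $G_n:=H_n(c_{k+2})/H_n(\mathbf c)$ and using $H_n(\mathbf c)/H_{n-1}(\mathbf c)=\prod_{\ell=1}^{n}t_\ell$ reduces it to $G_n=D_n^{2}+t_{n+1}G_{n-1}$, with base case $G_0=c_2=t_1+s_0^2$. A one-line induction then gives $G_n=\big(\prod_{\ell=1}^{n+1}t_\ell\big)\sum_{\ell=-1}^{n}D_\ell^{2}/\prod_{j=1}^{\ell+1}t_j$, which is precisely \eqref{eq:eq:HankelDetLeftShifted2}. Beyond this telescoping, the only delicate points are the correct bookkeeping of the truncation correction when passing from the infinite matrix $J$ to its finite section $J_n$ (absent for $J$ itself, a single rank-one term for $J^2$), and the identification of the resulting cofactor as a smaller double-shifted Hankel determinant, which is what closes the induction.
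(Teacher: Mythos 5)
Your proof is correct, but note that there is nothing in the paper to compare it against: Lemma \ref{lem:LeftShifted} is quoted from \cite[Prop.~1.2]{MWY} without proof, so your argument is a self-contained replacement for the citation rather than a variant of an argument in the text. Each step checks out. The identification $c_k=(J^k)_{0,0}$ is the standard consequence of the three-term recurrence (\ref{eq:3TermRec}) and orthogonality, valid under the standing hypothesis $H_n(\mathbf{c})\neq0$ for all $n$ (needed anyway for the $s_n$, $t_n$ to exist), and the normalization $c_0=1$ is harmless because both sides of each identity are homogeneous of degree $n+1$ in $\mathbf{c}$ while $s_n$, $t_n$, $D_n$ are scale-invariant. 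The triangular Krylov factorization $(c_{i+j})_{0\leq i,j\leq n}=W_n^{\top}V_n$ re-proves (\ref{eq:HankelDetTn}); the single-shift case indeed has no truncation error, because $J$ is tridiagonal and the supports of $w_i$, $v_j$ stay within $\{0,\dots,n\}$, which gives (\ref{eq:HankelDetLeftShifted1}); and your correction for the double shift is exactly right, since an intermediate index $n+1$ can occur only in the $(n,n)$ entry and contributes $J_{n,n+1}J_{n+1,n}\,(w_n)_n(v_n)_n=\prod_{\ell=1}^{n+1}t_\ell$. Invoking the adjugate form of the matrix determinant lemma is the correct choice, since $D_n$ may vanish (by (\ref{eq:HankelDetLeftShifted1}), $D_n=0$ precisely when $H_n(c_{k+1})=0$), so the inverse-based form would not apply; and the $(n,n)$ minor of $W_n^{\top}J_n^{2}V_n$ equals $H_{n-1}(c_{k+2})$ immediately from your displayed rank-one identity, because every entry of that matrix except the corner one agrees with $c_{i+j+2}$. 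The recursion $G_n=D_n^{2}+t_{n+1}G_{n-1}$ with $G_0=c_2=t_1+s_0^{2}$ then telescopes to (\ref{eq:eq:HankelDetLeftShifted2}). Two minor remarks: rename your Krylov matrices if this is to be inserted, since $V_n$ already denotes the statistical Hankel matrix in this paper; and for context, \cite{MWY} argue through Aigner's recursive-matrix formalism for Catalan-like numbers, which rests on essentially the same factorization through the Jacobi data, so what your route buys is a cleanly packaged, self-contained proof that the paper itself does not supply.
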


\begin{lem}
\cite[Eq.~(2.4)]{Han}\label{lem:LeftShiftedSn} For a given sequence
${\bf c}$ and $(s_{n})$ as defined above, we have 
\begin{equation}
s_{n}=-\frac{1}{H_{n-1}(c_{k+1})}\left(\frac{H_{n-1}(c_{k})H_{n}(c_{k+1})}{H_{n}(c_{k})}+\frac{H_{n}(c_{k})H_{n-2}(c_{k+1})}{H_{n-1}(c_{k})}\right).\label{eq:LeftShiftedSn}
\end{equation}
\end{lem}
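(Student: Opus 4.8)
The plan is to avoid any fresh determinant computation and instead read the formula off the two ingredients already assembled in this section: the identity $H_{n}(c_{k+1})=H_{n}(c_{k})\cdot D_{n}$ from~(\ref{eq:HankelDetLeftShifted1}), and the recurrence~(\ref{eq:RecurrrenceDn}) satisfied by the $D_{n}$. The key observation is that $s_{n}$ already occurs \emph{explicitly} in~(\ref{eq:RecurrrenceDn}); hence it suffices to solve that recurrence for $s_{n}$ and then rewrite every quantity appearing in the result as a ratio of Hankel determinants.

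First I would record the two substitution rules that turn $D$'s and $t$'s into Hankel determinants. From~(\ref{eq:HankelDetLeftShifted1}) we have $D_{m}=H_{m}(c_{k+1})/H_{m}(c_{k})$ for every $m$. For the $t_{n}$, Lemma~\ref{lem:HankelDetCF} together with the product formula~(\ref{eq:HankelDetTn}) gives $\zeta_{m}:=H_{m}(c_{k})/H_{m-1}(c_{k})=c_{0}t_{1}\cdots t_{m}$, so that consecutive quotients telescope to the standard expression $t_{n}=\zeta_{n}/\zeta_{n-1}=H_{n}(c_{k})H_{n-2}(c_{k})/H_{n-1}(c_{k})^{2}$ (the same $\zeta_{m}$ also appears in Lemma~\ref{lem:Orthogonality1}).

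Next I would shift the index in~(\ref{eq:RecurrrenceDn}): replacing $n$ by $n-1$ turns $D_{n+1}=-s_{n+1}D_{n}-t_{n+1}D_{n-1}$ into $D_{n}=-s_{n}D_{n-1}-t_{n}D_{n-2}$, whence
\[
s_{n}=-\frac{D_{n}+t_{n}D_{n-2}}{D_{n-1}}=-\frac{D_{n}}{D_{n-1}}-t_{n}\,\frac{D_{n-2}}{D_{n-1}}.
\]
Substituting the two rules above and writing $\Delta_{m}=H_{m}(c_{k})$, $\Delta_{m}^{*}=H_{m}(c_{k+1})$ for brevity, the first term becomes $\Delta_{n}^{*}\Delta_{n-1}/(\Delta_{n}\Delta_{n-1}^{*})$; in the second term the factor $\Delta_{n-2}$ supplied by $t_{n}$ cancels against the denominator of $D_{n-2}=\Delta_{n-2}^{*}/\Delta_{n-2}$, leaving $\Delta_{n}\Delta_{n-2}^{*}/(\Delta_{n-1}\Delta_{n-1}^{*})$. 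Factoring $1/\Delta_{n-1}^{*}$ out of the two surviving terms reproduces exactly the right-hand side of~(\ref{eq:LeftShiftedSn}).

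There is no genuinely hard step; the work is bookkeeping. The one place demanding care is the index shift in~(\ref{eq:RecurrrenceDn}) combined with the sign convention $D_{0}=-s_{0}$, $D_{-1}=1$, which must be tracked so that $s_{n}$ emerges with the stated sign rather than its negative. I would also check the smallest cases, in particular $n=1$, where the argument invokes $D_{-1}=1$ and the usual convention $H_{-1}(c_{k})=1$, so that the formula holds throughout the claimed range. Finally, the hypothesis $H_{n}(c_{k})\neq0$ underlying the existence of the orthogonal polynomials guarantees that all denominators $\Delta_{n},\Delta_{n-1},\Delta_{n-1}^{*}$ are nonzero, so every ratio written above is legitimate.
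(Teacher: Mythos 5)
Your derivation is correct, and it is worth noting that the paper itself gives \emph{no} proof of this lemma: it is quoted verbatim from Han \cite[Eq.~(2.4)]{Han}. What you have produced is a self-contained proof using only the material already assembled in the section, and it checks out line by line: the substitution $D_{m}=H_{m}(c_{k+1})/H_{m}(c_{k})$ is exactly (\ref{eq:HankelDetLeftShifted1}); the identity $t_{n}=H_{n}(c_{k})H_{n-2}(c_{k})/H_{n-1}(c_{k})^{2}$ follows from (\ref{eq:HankelDetTn}) by taking the quotient $\zeta_{n}/\zeta_{n-1}$ with $\zeta_{m}=H_{m}(c_{k})/H_{m-1}(c_{k})=c_{0}t_{1}\cdots t_{m}$; and the downshifted recurrence $D_{n}=-s_{n}D_{n-1}-t_{n}D_{n-2}$ (valid for $n\geq1$ with the conventions $D_{0}=-s_{0}$, $D_{-1}=1$) is (\ref{eq:RecurrrenceDn}). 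Solving for $s_{n}$ and substituting gives precisely
\begin{equation*}
s_{n}=-\frac{1}{H_{n-1}(c_{k+1})}\left(\frac{H_{n-1}(c_{k})H_{n}(c_{k+1})}{H_{n}(c_{k})}+\frac{H_{n}(c_{k})H_{n-2}(c_{k+1})}{H_{n-1}(c_{k})}\right),
\end{equation*}
matching (\ref{eq:LeftShiftedSn}). What your route buys is that the paper's quoted Lemmas \ref{lem:Orthogonality1}--\ref{lem:LeftShifted} already imply Lemma \ref{lem:LeftShiftedSn}, so the citation to \cite{Han} is logically dispensable. One small correction: your closing claim that the hypothesis $H_{n}(c_{k})\neq0$ guarantees $H_{n-1}(c_{k+1})\neq0$ is false --- for instance, for a symmetric sequence (all $s_{n}=0$) the even-indexed $D_{m}$, and hence the even-indexed $H_{m}(c_{k+1})$, vanish while all $H_{m}(c_{k})$ are nonzero. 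The nonvanishing of $H_{n-1}(c_{k+1})$ (equivalently $D_{n-1}\neq0$, which legitimizes your division in the recurrence) must instead be read as an implicit hypothesis of the lemma itself, since that quantity sits in the denominator of (\ref{eq:LeftShiftedSn}); under that reading your proof is complete.
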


\subsection{Continued fractions}

Following the usage in books such as \cite{CF} or \cite{CF2}, we
write 
\begin{equation}
b_{0}+\overset{\infty}{\text{\ensuremath{\underset{m=1}{\mathbf{K}}}}}(a_{m}/b_{m})=b_{0}+\mathbf{K}(a_{m}/b_{m})=b_{0}+\frac{a_{1}}{b_{1}+\frac{a_{2}}{b_{2}+\ddots}}\label{eq:InfiniteCF}
\end{equation}
for an \emph{infinite continued fraction}. The $n$th \emph{approximant}
is expressed by 
\begin{equation}
b_{0}+\overset{n}{\text{\ensuremath{\underset{m=1}{\mathbf{K}}}}}(a_{m}/b_{m})=b_{0}+\frac{a_{1}}{b_{1}+\ddots+\frac{a_{n}}{b_{n}}}=\frac{A_{n}}{B_{n}},\label{eq:nApproximant}
\end{equation}
and $A_{n}$, $B_{n}$ are called the $n$th \emph{numerator} and
\emph{denominator}, respectively. The continued fraction (\ref{eq:InfiniteCF})
is said to \emph{converge} if the sequence of approximants in (\ref{eq:nApproximant})
converges. In this case, the limit is called the \emph{value} of the
continued fraction (\ref{eq:InfiniteCF}). 

Two continued fractions are said to be \emph{equivalent} if and only
if they have the same sequences of approximants. In other words, we
have 
\[
b_{0}+\overset{n}{\text{\ensuremath{\underset{m=1}{\mathbf{K}}}}}(a_{m}/b_{m})=d_{0}+\overset{n}{\text{\ensuremath{\underset{m=1}{\mathbf{K}}}}}(c_{m}/d_{m})
\]
if and only if there exists a sequence of nonzero complex numbers
$(r_{m})_{m\geq0}$ with $r_{0}=1$, such that for $m\geq0$, 
\begin{equation}
d_{m}=r_{m}b_{m}\quad\text{and}\quad c_{m+1}=r_{m+1}r_{m}a_{m+1};\label{eq:EquivalentCFs}
\end{equation}
(see \cite[Eq.~(1.4.2)]{CF}). We also require the following special
case of the more general concept of a contraction; see, e.g., \cite[p.~16]{CF}.
\begin{defn}
Let $A_{n}$, $B_{n}$ be the $n$th numerator and denominator, respectively,
of a continued fraction $\mathrm{cf}_{1}:=b_{0}+\mathbf{K}(a_{m}/b_{m})$
, and let $C_{n}$, $D_{n}$ be the corresponding quantities of $\mathrm{cf}_{2}:=d_{0}+\mathbf{K}(c_{m}/d_{m})$.
Then $\mathrm{cf}_{2}$ is called an \emph{even canonical contraction}
of $\mathrm{cf}_{1}$ if 
\[
C_{n}=A_{2n}\qquad D_{n}=B_{2n}\qquad(n\geq0);
\]
and is called an \emph{odd canonical contraction} of cf1 if 
\[
C_{0}=\frac{A_{1}}{B_{1}},\quad D_{0}=1,\quad C_{n}=A_{2n+1},\quad D_{n}=B_{2n+1}\qquad(n\geq0).
\]
\end{defn}

We will now state three identities that will be used in later sections;
see \cite[pp.~16--18]{CF}, \cite[pp.~83--85]{CF2}, or \cite[pp.~21--21]{CFWall}
for proofs and further details. 
\begin{lem}
An even canonical contraction of $b_{0}+\mathbf{K}(a_{m}/b_{m})$
exists if and only if $b_{2k}\neq0$ for $k\geq1$, and we have 
\[
b_{0}+\overset{\infty}{\underset{m=1}{\mathbf{K}}}(a_{m}/b_{m})=b_{0}+\frac{a_{1}b_{2}}{a_{2}+b_{1}b_{2}-\frac{a_{2}a_{3}\frac{b_{4}}{b_{2}}}{a_{4}+b_{3}b_{4}+a_{3}\frac{b_{4}}{b_{2}}-\frac{a_{4}a_{5}\frac{b_{6}}{b_{4}}}{a_{6}+b_{5}b_{6}+a_{5}\frac{b_{6}}{b_{4}-\frac{a_{6}a_{7}\frac{b_{8}}{b_{6}}}{\ddots}}}}}
\]
In particular, with $b_{0}=0$, $b_{k}=1$ for $k\geq1$, $a_{1}=1$,
and $a_{k}=\alpha_{k-1}t$ ($k\geq1$), for some variable $t$, we
have 
\begin{equation}
\frac{1}{1-\frac{\alpha_{1}t}{1-\frac{\alpha_{2}t}{1-\ddots}}}=\frac{1}{1-\alpha_{1}t-\frac{\alpha_{1}\alpha_{2}t^{2}}{1-(\alpha_{2}+\alpha_{3})t-\frac{\alpha_{3}\alpha_{4}t^{2}}{1-(\alpha_{4}+\alpha_{5})t-\frac{\alpha_{5}\alpha_{6}t^{2}}{\ddots}}}}\label{eq:EvenContraction}
\end{equation}
Similarly, an odd canonical contraction gives 
\begin{equation}
1+\frac{\alpha_{1}t}{1-(\alpha_{1}+\alpha_{2})t-\frac{\alpha_{2}\alpha_{3}t^{2}}{1-(\alpha_{3}+\alpha_{4})t-\frac{\alpha_{4}\alpha_{5}t^{2}}{1-(\alpha_{5}+\alpha_{6})t-\frac{\alpha_{6}\alpha_{7}t^{2}}{\ddots}}}},\label{eq:OddContraction}
\end{equation}
 for the continued fraction on the left-hand side of (\ref{eq:EvenContraction}).
\end{lem}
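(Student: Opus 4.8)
The plan is to derive both contraction formulas directly from the fundamental recurrence satisfied by the numerators $A_n$ and denominators $B_n$ of the continued fraction $\mathrm{cf}_1=b_0+\mathbf{K}(a_m/b_m)$. Recall that these obey $A_n=b_nA_{n-1}+a_nA_{n-2}$ and $B_n=b_nB_{n-1}+a_nB_{n-2}$ for $n\geq1$, with $A_{-1}=1$, $A_0=b_0$, $B_{-1}=0$, $B_0=1$, and that the $n$th approximant is $A_n/B_n$. By the definition of even canonical contraction, I must produce a continued fraction $d_0+\mathbf{K}(c_m/d_m)$ whose convergents $C_n/D_n$ satisfy $C_n=A_{2n}$ and $D_n=B_{2n}$; equivalently, I must show that the two even-indexed sequences $(A_{2n})_n$ and $(B_{2n})_n$ satisfy one and the same three-term recurrence $X_n=d_nX_{n-1}+c_nX_{n-2}$, and then read off $c_n,d_n$.

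First I would carry out the elimination of the odd-indexed terms. Writing the recurrence at indices $2n$, $2n-1$ and $2n-2$, I solve the $(2n-2)$ equation as $A_{2n-3}=(A_{2n-2}-a_{2n-2}A_{2n-4})/b_{2n-2}$ — the step that forces $b_{2n-2}\neq0$ — and substitute back to express $A_{2n}$ through $A_{2n-2}$ and $A_{2n-4}$ alone:
\[
A_{2n}=\left(b_{2n}b_{2n-1}+a_{2n}+\frac{a_{2n-1}b_{2n}}{b_{2n-2}}\right)A_{2n-2}-\frac{a_{2n-1}a_{2n-2}b_{2n}}{b_{2n-2}}\,A_{2n-4}.
\]
The identical relation holds for $B$, since $(B_n)$ obeys the same original recurrence. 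Hence $d_n=b_{2n}b_{2n-1}+a_{2n}+a_{2n-1}b_{2n}/b_{2n-2}$ and $c_n=-a_{2n-1}a_{2n-2}b_{2n}/b_{2n-2}$ for $n\geq2$, while the base data $d_0=b_0$, $D_0=1$, $c_1=a_1b_2$, $d_1=a_2+b_1b_2$ are read off directly from $A_0,A_1,A_2$ and $B_0,B_1,B_2$. Feeding these coefficients into $d_0+\mathbf{K}(c_m/d_m)$ reproduces the displayed general formula verbatim, and the requirement $b_{2k}\neq0$ ($k\geq1$) is exactly what makes every $c_n,d_n$ well defined — conversely, if some $b_{2k}=0$ the construction breaks down, giving the ``only if'' direction.

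With the general formula available, the two particular identities follow by specialization. For (\ref{eq:EvenContraction}) I set $b_0=0$, $b_k=1$ ($k\geq1$), $a_1=1$ and (absorbing the sign of the left-hand side) $a_k=-\alpha_{k-1}t$ for $k\geq2$. Since then every ratio $b_i/b_j$ equals $1$, the successive denominators collapse to $d_1=1-\alpha_1t$, $d_2=1-(\alpha_2+\alpha_3)t,\ldots$ and the successive numerators, with their signs, to $a_1b_2=1$, $-\alpha_1\alpha_2t^2$, $-\alpha_3\alpha_4t^2,\ldots$, which is precisely the claimed right-hand side. For the odd contraction (\ref{eq:OddContraction}) I repeat the elimination one index out of phase: I express $A_{2n+1},B_{2n+1}$ through $A_{2n-1},A_{2n-3}$ by eliminating the even-indexed convergents, which now needs $b_{2k+1}\neq0$, initialize with $C_0=A_1/B_1$ and $D_0=1$ (this yields the leading $1$ after specialization, as $A_1/B_1=1$), and substitute the same parameter values.

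The routine algebra of the eliminations is not the difficulty; the main obstacle is the bookkeeping at the two ends. I must check that the base cases $n=0,1$ agree with the recurrence-generated coefficients so that the induction ``$C_n=A_{2n}$, $D_n=B_{2n}$'' (respectively the odd analogue) is seamless, and I must confirm that passing to every second approximant genuinely preserves the value of the continued fraction — that $A_{2n}/B_{2n}$ really is the claimed subsequence of approximants — which is where the standard determinant identity $A_nB_{n-1}-A_{n-1}B_n=(-1)^{n-1}\prod_{i=1}^{n}a_i$ guarantees $B_{2n}\neq0$ and rules out spurious cancellation. Keeping the index shifts straight between the even and odd cases, and matching signs in the specialization to the minus-sign form on the left-hand side, is the part most prone to error.
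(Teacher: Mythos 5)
This lemma is quoted in the paper without proof (the authors refer to \cite{CF}, \cite{CF2}, and \cite{CFWall} for details), and your argument is precisely the classical one given in those references: eliminate the odd-indexed terms from the three-term recurrences for $A_n$ and $B_n$, read off $c_n=-a_{2n-1}a_{2n-2}b_{2n}/b_{2n-2}$ and $d_n=a_{2n}+b_{2n-1}b_{2n}+a_{2n-1}b_{2n}/b_{2n-2}$, check the base cases, and specialize. Your computation is correct, and you rightly noticed that the specialization as literally stated ($a_k=\alpha_{k-1}t$) must be read as $a_k=-\alpha_{k-1}t$ for $k\ge 2$ to match the minus signs on the left-hand side of \eqref{eq:EvenContraction}; since the paper only uses the identities as equalities of formal power series, the convergence caveat you raise at the end is not needed here.
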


\subsection{Polygamma functions}

We shall use the \emph{polygamma function of order $m$}: $\psi^{(m)}(z):=\frac{d^{m+1}}{dz^{m+1}}\log\Gamma(z)$,
with 
\[
\psi(z)=\psi^{(0)}(z)=\frac{d}{dz}\left(\log\Gamma(z)\right)=\frac{\Gamma'(z)}{\Gamma(z)}.
\]
Among all the properties, we first recall the following well-known
complete asymptotic expansion, valid for $|\arg z|<\pi$: (see e.g.,
\cite[p.~48, Eq.~(12)]{ErMaObTr})
\begin{equation}
\log\Gamma(z+x)=\left(z+x-\frac{1}{2}\right)\log z-z+\frac{\log(2\pi)}{2}+\sum_{n=1}^{\infty}\frac{(-1)^{n+1}B_{n+1}(x)}{n(n+1)z^{n}},\label{eq:AsymptoticLogGamma}
\end{equation}
which, by differentiating both sides with respect to $z$, yields
the asymptotic expansion of $\psi(z)$:

\begin{onehalfspace}
\noindent 
\begin{equation}
\psi(z+x)=\log z+\frac{x-\frac{1}{2}}{z}-\sum_{n=1}^{\infty}\frac{(-1)^{n+1}B_{n+1}(x)}{(n+1)z^{n+1}}.\label{eq:AsymptoticPsi}
\end{equation}
Meanwhile, the series expansion formula (see e.g., \cite[Entry 5.~7.~6]{NIST})

\noindent 
\begin{equation}
\psi(z)=-\gamma-\frac{1}{z}+\sum_{k=1}^{\infty}\frac{z}{k(k+z)}=-\gamma+\sum_{k=0}^{\infty}\left(\frac{1}{k+1}-\frac{1}{k+z}\right),\label{eq:PsiSeries}
\end{equation}
later shall lead us to the continued fraction expressions that are
crucial to our proofs. 
\end{onehalfspace}

\section{\label{sec:HD}Hankel Determinants}

We begin with our three main Hankel determinants. 

\subsection{$B_{2k+1}\left(\frac{x+1}{2}\right)/(2k+1)$}
\begin{proof}
[Proof of \eqref{eq:B2k+1}] From (\ref{eq:AsymptoticPsi}), we have
\begin{align*}
 & \quad\psi\left(z+\frac{1+x}{2}\right)-\psi\left(z+\frac{1-x}{2}\right)\\
 & =\frac{x}{z}+\sum_{n=1}^{\infty}\frac{(-1)^{n+1}}{(n+1)z^{n+1}}\left(B_{n+1}\left(\tfrac{1-x}{2}\right)-B_{n+1}\left(\tfrac{1+x}{2}\right)\right)=2\sum_{n=0}^{\infty}\frac{B_{2n+1}\left(\frac{1+x}{2}\right)}{(2n+1)z^{2n+1}},
\end{align*}
where we used the reflection formula (see e.g., \cite[Entry  24.~4.~23]{NIST})
so that 

\begin{onehalfspace}
\noindent 
\[
B_{n+1}\left(\frac{1-x}{2}\right)=B_{n+1}\left(1-\frac{1+x}{2}\right)=(-1)^{n}B_{n+1}\left(\frac{1+x}{2}\right)
\]
and $B_{1}(x)=x-1/2$. This, finally by the change of variables $z\mapsto1/z$,
implies

\noindent 
\[
\sum_{n=0}^{\infty}\frac{B_{2n+1}(\frac{1+r}{2})}{2n+1}z^{2n}=\frac{\psi\left(\frac{1}{z}+\frac{1+x}{2}\right)-\psi\left(\frac{1}{z}+\frac{1-x}{2}\right)}{2z}.
\]
We denote the left-hand side of the above equation by $F(z),$ where
the independence on $x$ is implied. By the series expansion (\ref{eq:PsiSeries}),
we see

\noindent 
\[
zF(z)=\sum_{k=0}^{\infty}\left(\frac{1}{\frac{2}{z}-x+2k+1}-\frac{1}{\frac{2}{z}+x+2k+1}\right).
\]

\noindent Now we use the following continued fraction due to Ramanujan
(see e.g., \cite[p.~149, Entry 30]{RamanujanII}) for either $n$
is an integer or $Re(t)>0$: 

\noindent 
\[
\sum_{k=0}^{\infty}\left(\frac{1}{t-n+2k+1}-\frac{1}{t+n+2k+1}\right)=\frac{n}{t+\frac{1(1-n^{2})}{3t+\frac{2^{2}(2^{2}-n^{2})}{5t+\frac{3^{2}(3^{2}-n^{2})}{7t+\ddots}}}},
\]
by letting $t=2/z$ and $n=x$, to get

\noindent 
\[
zF(z)=\frac{x}{\frac{2}{z}+\frac{1(1-x^{2})}{\frac{6}{z}+\frac{2^{2}(2^{2}-x^{2})}{\frac{10}{z}+\frac{3^{2}(3^{2}-x^{2})}{\frac{14}{z}+\ddots}}}}.
\]
Using equivalence of continued fractions (\ref{eq:EquivalentCFs}),
with
\[
r_{m}=\begin{cases}
1, & m=0;\\
\frac{z}{2(2m-1)}, & m\geq1,
\end{cases}\quad a_{m}=\begin{cases}
x, & m=1;\\
(m-1)^{2}((m-1)^{2}-x^{2}), & m\geq2,
\end{cases}
\]
and $b_{m}=\frac{2(2m-1)}{z}$, we could get

\noindent 
\begin{align*}
d_{m} & =b_{m}r_{m}=\frac{2(2m-1)}{z}\frac{z}{2(2m-1)}=1,\\
c_{m+1} & =r_{m+1}r_{m}a_{m+1}=\frac{m^{2}(m^{2}-r^{2})z^{2}}{4(2m+1)(2m-1)}.
\end{align*}
Hence, after dividing both sides by $xz/2$, we have

\noindent 
\begin{equation}
\frac{F(z)}{\frac{x}{2}}=\text{\ensuremath{\frac{1}{1+\frac{\frac{(1-x^{2})z^{2}}{12}}{1+\frac{\frac{4(4-x^{2})z^{2}}{15}}{1+\ddots}}}}. }\label{eq:CFFz}
\end{equation}
For simplification, we define
\begin{equation}
\text{\ensuremath{\alpha_{m}=\frac{m^{2}(x^{2}-m^{2})}{4(2m+1)(2m-1)}}},\label{eq:AlphaM}
\end{equation}
and apply the even canonical contraction (\ref{eq:EvenContraction})
on (\ref{eq:CFFz}), to obtain

\noindent 
\begin{align*}
\tau_{m}^{(0)}=\alpha_{2m-1}\alpha_{2m} & =\frac{(2m-1)^{2}(2m)^{2}(x^{2}-(2m-1)^{2})(x^{2}-(2m)^{2})}{16(4m-3)(4m-1)^{2}(4m+1)},\\
\sigma_{m}^{(0)}=\alpha_{2m}+\alpha_{2m+1} & =\frac{(8m^{2}+4m-1)r^{2}-(32m^{4}+32m^{3}+8m^{2}-1)}{4(4m+3)(4m+1)(4m-1)},
\end{align*}
such that 
\[
F(z)=\frac{\frac{x}{2}}{1+\sigma_{0}^{(0)}z^{2}-\frac{\tau_{1}^{(0)}z^{4}}{1+\sigma_{1}^{(0)}z^{2}-\frac{\tau_{2}^{(0)}z^{4}}{1+\sigma_{1}^{(0)}z^{2}-\ddots}}}.
\]
Then use Lem.~\ref{lem:HankelDetCF} with $c_{0}=\frac{x}{2}$, $s_{j}=-\sigma_{j}^{(0)}$,
and $t_{j}=\tau_{j}^{(0)}$, we have the desired result (\ref{eq:B2k+1}).
\qedhere
\end{onehalfspace}
\end{proof}

\subsection{$B_{2k+3}\left(\frac{x+1}{2}\right)/(2k+3)$}
\begin{proof}
[Proof of \eqref{eq:B2k+3}] Similarly, let 
\[
G(z)=\sum_{n=0}^{\infty}\frac{B_{2n+3}(\frac{1+x}{2})}{2n+3}z^{2n+2}=F(z)-B_{1}\left(\frac{1+x}{2}\right)=F(z)-\frac{x}{2}.
\]
Then, by (\ref{eq:CFFz}), 
\begin{equation}
\frac{G(z)}{\frac{x}{2}}=\frac{F(z)}{\frac{x}{2}}-1=\text{\ensuremath{\frac{1}{1+\frac{\frac{(1-x^{2})z^{2}}{12}}{1+\frac{\frac{4(4-x^{2})z^{2}}{15}}{1+\ddots}}}-1}}.\label{eq:CFGz}
\end{equation}
Focus on the first continued fractions above. Using odd canonical
contraction (\ref{eq:OddContraction}) with $\alpha_{m}$ defined
in (\ref{eq:AlphaM}), we would have
\[
G(z)=\frac{\frac{x^{3}-x}{24}}{1-\sigma_{0}^{(1)}z^{2}-\frac{\tau_{1}^{(1)}z^{4}}{1-\sigma_{1}^{(1)}z^{2}-\frac{\tau_{2}^{(1)}z^{4}}{1-\ddots}}},
\]
where
\begin{align}
\tau{}_{m}^{(1)}=\alpha{}_{2m}\alpha{}_{2m+1} & =\frac{(2m)^{2}(2m+1)^{2}(x^{2}-(2m))(x^{2}-(2m+1)^{2})}{16(4m-1)(4m+1)^{2}(4m+3)}\label{eq:tau1}\\
\sigma_{m}^{(1)}=\alpha{}_{2m+1}+\alpha{}_{2m+2} & =\frac{(2m+1)^{2}(x^{2}-(2m+1)^{2})}{4(4m+3)(4m+1)}+\frac{(2m+2)^{2}(x^{2}-(2m+2)^{2})}{4(4m+5)(4m+3)}.\label{eq:sigma1}
\end{align}

\begin{onehalfspace}
\noindent Then use Lem.~\ref{lem:HankelDetCF} with $c_{0}=\frac{x^{3}-x}{24}=\frac{x}{2}\alpha_{1}$,
$s_{j}=-\sigma_{j}^{(1)}$, and $t_{j}=\tau_{j}^{(1)}$, we have the
desired result. 
\end{onehalfspace}
\end{proof}
\begin{rem*}
Noting that $c_{k}=b_{k+1}$, so the ``left-shifted'' sequence formula
applies here and leads to the same result (\ref{eq:B2k+3}). In fact,
we can easily derive that 
\begin{align}
D_{n}^{(0)} & :=\det\begin{pmatrix}\sigma_{0}^{(0)} & 1 & 0 & 0 & \cdots & 0\\
\tau_{1}^{(0)} & \sigma_{1}^{(0)} & 1 & 0 & \cdots & 0\\
0 & \tau_{2}^{(0)} & \sigma_{2}^{(0)} & 1 & \cdots & 0\\
\vdots & \vdots & \ddots & \ddots & \ddots & \vdots\\
0 & \cdots & 0 & 0 & \tau_{n}^{(0)} & \sigma_{n}^{(0)}
\end{pmatrix}\label{eq:Dn0}\\
 & =\frac{1}{4^{n+1}}\prod_{\ell=1}^{n+1}\frac{(x^{2}-(2\ell+1)^{2})(2\ell-1)^{2}}{(4\ell-1)(4\ell-3)},\nonumber 
\end{align}
so that 
\begin{equation}
H_{n}\left(\frac{B_{2k+3}\left(\frac{x+1}{2}\right)}{2k+3}\right)=H_{n}\left(\frac{B_{2k+1}\left(\frac{x+1}{2}\right)}{2k+1}\right)D_{n}^{(0)}.\label{eq:bkck}
\end{equation}
\end{rem*}

\subsection{$B_{2k+5}\left(\frac{x+1}{2}\right)/(2k+5)$}

The following proof contains some tedious calculation and simplification,
the details of which are omitted. 
\begin{proof}
[Proof of \eqref{eq:Hn2k+5}] Similarly as the remark above, it suffices
to show 
\[
\frac{H_{n}\left(\frac{B_{2k+5}\left(\frac{x+1}{2}\right)}{2k+5}\right)}{H_{n}\left(\frac{B_{2k+3}\left(\frac{x+1}{2}\right)}{2k+3}\right)}=:D_{n}^{(1)}=\det\begin{pmatrix}\sigma_{0}^{(1)} & 1 & 0 & 0 & \cdots & 0\\
\tau_{1}^{(1)} & \sigma_{1}^{(1)} & 1 & 0 & \cdots & 0\\
0 & \tau_{2}^{(1)} & \sigma_{2}^{(1)} & 1 & \cdots & 0\\
\vdots & \vdots & \ddots & \ddots & \ddots & \vdots\\
0 & \cdots & 0 & 0 & \tau_{n}^{(1)} & \sigma_{n}^{(1)}
\end{pmatrix},
\]
or equivalently, by (\ref{eq:RecurrrenceDn}), to show 
\begin{equation}
D_{n+1}^{(1)}=\sigma_{n+1}^{(1)}D_{n}^{(1)}-\tau_{n+1}^{(1)}D_{n-1}^{(1)},\label{eq:RecD1}
\end{equation}
where $H_{n}\left(B_{2k+5}\left(\frac{x+1}{2}\right)/(2k+5)\right)$
is given by (\ref{eq:Hn2k+5}), and $\tau_{n}^{(1)}$ and $\sigma_{n}^{(1)}$
are given by (\ref{eq:tau1}) and (\ref{eq:sigma1}), respectively. 
\begin{enumerate}
\item First of all, by simplification, we see 
\begin{align*}
\left(\frac{x}{2}+\frac{1}{2}\right)_{n+2}\left(\frac{x}{2}-n-\frac{3}{2}\right)_{n+2} & =\frac{1}{2^{2n+4}}\prod_{i=1}^{n+2}(x-(2i-1)^{2}).
\end{align*}
\item In addition, let 
\[
f(n,x):=\frac{1}{5\cdot2^{n+2}}\prod_{i=1}^{n}\frac{(2i+3)!^{2}(2i+2)!^{2}}{(4i+5)!(4i+4)!}\prod_{l=0}^{n}(x-2n-1+2l)_{4n-4l+3},
\]
and the recurrence, upon simplification, 
\[
\frac{f(n+1,x)}{f(n,x)}=\frac{x(x^{2}-1)}{2}\cdot\frac{(2n+5)!^{2}(2n+4)!^{2}}{(4n+9)!(4n+8)!}\prod_{l=1}^{n+1}\left[x^{2}-\left(2l+1\right)^{2}\right]\left[x^{2}-\left(2l\right)^{2}\right]
\]
indicates 
\[
\frac{H_{n+1}\left(\frac{B_{2k+3}\left(\frac{x+1}{2}\right)}{2k+3}\right)}{H_{n}\left(\frac{B_{2k+3}\left(\frac{x+1}{2}\right)}{2k+3}\right)}=\frac{f(n+1,x)}{f(n,x)}\cdot\frac{4(4n+9)(4n+7)}{(2n+5)^{2}(2n+4)^{2}}.
\]
\item Due to $(a)_{n}=\Gamma(a+n)/\Gamma(a)$ and $\Gamma\left(\frac{1}{2}+n\right)=(2n-1)!!\sqrt{\pi}/2^{n}$,
we see 
\begin{align*}
\frac{\left(n+\frac{5}{2}\right)_{i-1}}{\left(n-i+\frac{5}{2}\right)_{i}(n+2-i)!(n+1+i)!} & =\frac{\Gamma\left(n+i+\frac{3}{2}\right)\Gamma\left(n-i+\frac{5}{2}\right)}{\Gamma^{2}\left(n+\frac{5}{2}\right)\Gamma(n+3-i)\Gamma(n+2+i)}\\
 & =\frac{4^{n+2}}{(2n+3)!!^{2}\pi}\frac{\Gamma\left(n+i+\frac{3}{2}\right)\Gamma\left(n-i+\frac{5}{2}\right)}{\Gamma(n+3-i)\Gamma(n+2+i)}.
\end{align*}
\end{enumerate}
Therefore, it suffices to show that 
\[
D_{n}^{(1)}=\frac{2(n+1)!^{2}}{(4n+5)!!\pi}\prod_{i=1}^{n+2}(x^{2}-(2i-1)^{2})\sum_{i=1}^{n+2}\frac{(2i-1)\Gamma\left(n+i+\frac{3}{2}\right)\Gamma\left(n-i+\frac{5}{2}\right)}{\Gamma(n+3-i)\Gamma(n+2+i)\left(x^{2}-(2i-1)^{2}\right)},
\]
satisfies (\ref{eq:RecD1}). Although the product term $\prod_{i=1}^{n+2}(x^{2}-(2i-1)^{2})$
has degree $2n+4$, the sum with $x^{2}-(2i-1)^{2}$ in the denominator
will always cancel one factor. Therefore, $D_{n+1}^{(1)}$ is a polynomials
in $x$ of degree at most $2n+2$. Also noting that $\tau_{n}^{(1)}$
is of degree $4$; while $\sigma_{n}^{(1)}$ of degree $2$, in $x$,
we see (\ref{eq:RecD1}) is basically showing two polynomials of degree
at most $2(n+2)$ are identical. Therefore, as long as the left-hand
side matches the right-hand side at $2n+5$ different values of $x$,
(\ref{eq:RecD1}) holds for any $x$, and the proof is complete. 
\begin{enumerate}
\item Consider $x=\pm(2j-1)$, where $j=1,2,\ldots,n+1$ (i.e., $2n+2$
different points). In the case, all the terms in the summation of
$D_{n}^{(1)}$ will vanish, except for exactly the one with $i=2j-1$,
since the factor $x^{2}-(2i-1)^{2}$ is canceled with the previous
product, making it 
\[
\left(\prod_{i=1}^{j-1}((2j-1)^{2}-(2i-1)^{2})\right)\left(\prod_{i=j+1}^{n+2}((2j-1)^{2}-(2i-1)^{2})\right)
\]
Then, by canceling the common product term and after simplification,
we see the left-hand side of (\ref{eq:RecD1}) is given by, 
\[
\left(\prod_{i=n+2}^{n+3}((2j-1)^{2}-(2i-1)^{2})\right)\frac{2(n+2)!^{2}(2j-1)}{4^{2n+5}(4n+9)!!}\binom{2n+4+2j}{n+2+j}\binom{2n+6-2j}{n+3-j},
\]
while the right-hand side is 
\begin{align*}
 & \quad\left(\frac{(2n+4)^{2}((2j-1)^{2}-(2n+4)^{2})}{4(4n+7)(4n+9)}+\frac{(2n+3)^{2}((2j-1)^{2}-(2n+3)^{2})}{4(4n+7)(4n+5)}\right)\\
 & \times\left(((2n-1)^{2}-(2n+3)^{2})\right)\frac{2(n+1)!^{2}(2j-1)}{4^{2n+3}(4n+5)!!}\binom{2n+2+2j}{n+1+j}\binom{2n+4-2j}{n+2-j}\\
 & -\frac{(2n+3)^{2}(2n+2)^{2}((2j-1)^{2}-(2n+3)^{2})((2j-1)^{2}-(2n+2)^{2})}{16(4n+7)(4n+3)(4n+5)^{2}}\\
 & \times\frac{2(n!)^{2}(2j-1)}{4^{2n+1}(4n+1)!!}\binom{2n+2j}{n+j}\binom{2n+2-2j}{n+1-j}.
\end{align*}
Further simplification shows that (\ref{eq:RecD1}) with $x=\pm(2j-1)$,
for $j=1,2,\ldots,n+1$ is equivalent to 
\begin{align*}
 & \quad\left((2j-1)^{2}-(2n+5)^{2}\right)\frac{(n+2)^{2}}{4(4n+9)}\frac{(2n+3+2j)(2n+5-2j)}{(n+2+j)(n+3-j)}\\
 & =\left(\frac{(2n+4)^{2}((2j-1)^{2}-(2n+4)^{2})}{4(4n+9)}+\frac{(2n+3)^{2}((2j-1)^{2}-(2n+3)^{2})}{4(4n+5)}\right)\\
 & \quad+\frac{(2n+3)^{2}(n+1+j)(n+2-j)}{(4n+5)},
\end{align*}
which is trivial to verify. 
\item Let $x=\pm(2n+3)$, i.e., $x=\pm(2j-1)$ for $j=n+2$. Note that in
this case, the summation in $D_{n-1}^{(1)}$ will not reduce to a
single term; but $\tau_{n+1}^{(1)}=0$, which simplifies (\ref{eq:RecD1})
into 
\begin{align*}
 & \quad\left((2n+3)^{2}-(2n+5)^{2}\right)\frac{2(n+2)!^{2}(2n+3)}{4^{2n+5}(4n+9)!!}\binom{4n+8}{2n+4}\binom{2}{1}\\
 & =\frac{(2n+4)^{2}((2n+3)^{2}-(2n+4)^{2})}{4(4n+7)(4n+9)}\frac{2(n+1)!^{2}2n+3}{4^{2n+3}(4n+5)!!}\binom{4n+6}{2n+3}\binom{0}{0},
\end{align*}
which is equivalent to the trivial identity 
\[
(n+2)\binom{4n+8}{2n+4}=(4n+7)\binom{4n+6}{2n+3}.
\]
\item Now, we have already checked $2n+4$ different points, so one more
is adequate. Note that $x=2n+5$ does not work, since in this case,
the summation in both $D_{n}^{(1)}$ and $D_{n-1}^{(1)}$ remain.
Instead, we take $x=2n+2$, so that 
\[
\sigma_{n+1}^{(1)}\bigg|_{x=2n+2}=-\frac{(2n+3)(6n+13)}{4(4n+9)}\quad\text{and}\quad\tau_{n+1}^{(1)}\bigg|_{x=2n+2}=0.
\]
Therefore, it suffices to show
\begin{align*}
 & \quad-3(n+2){}^{2}\sum_{i=1}^{n+3}\frac{(2i-1)\Gamma\left(n+i+\frac{5}{2}\right)\Gamma\left(n-i+\frac{7}{2}\right)}{\Gamma(n+4-i)\Gamma(n+3+i)\left((2n+2)^{2}-(2i-1)^{2}\right)}\\
 & =-\frac{(2n+3)(6n+13)}{4}\sum_{i=1}^{n+2}\frac{(2i-1)\Gamma\left(n+i+\frac{3}{2}\right)\Gamma\left(n-i+\frac{5}{2}\right)}{\Gamma(n+3-i)\Gamma(n+2+i)\left((2n+2)^{2}-(2i-1)^{2}\right)},
\end{align*}
i.e., 
\begin{align*}
 & \quad\frac{(n+2)^{2}\pi(2n+5)\binom{4n+10}{2n+5}}{4^{2n+5}\left(-(4n+7)\right)}\\
 & =\sum_{i=1}^{n+2}\frac{(2i-1)\Gamma\left(n+i+\frac{3}{2}\right)\Gamma\left(n-i+\frac{5}{2}\right)}{(n+2-i)!(n+1+i)!\left((2n+2)^{2}-(2i-1)^{2}\right)}\\
 & \quad\times\left(\frac{(2n+3)(6n+13)}{4}-\frac{3(n+2)^{2}\left(n+i+\frac{3}{2}\right)\left(n-i+\frac{5}{2}\right)}{(n+3-i)(n+2+i)}\right).
\end{align*}
We use the WZ-method, i.e., the \textsf{fastZeil.m} \footnote{https://risc.jku.at/sw/fastzeil/}
package to show that the sum on the right-hand side is
\begin{align*}
 & \quad-\frac{(n+2)^{2}(2n+5)(4n+9)\sqrt{\pi}\Gamma\left(2n+\frac{7}{2}\right)}{4(2n+5)!}\\
 & =\frac{3(n+2)^{2}(2n+5)\Gamma\left(2n+\frac{11}{2}\right)\Gamma\left(\frac{1}{2}\right)}{\Gamma(2n+6)\left(-3(4n+7)\right)},
\end{align*}
which is exactly the left-hand side. 
\end{enumerate}
Therefore, we have proven (\ref{eq:RecD1}), which is equivalent to
(\ref{eq:Hn2k+5}). 
\end{proof}

\section{\label{sec:Vk}Results on $V_{k}$.}

Due to (\ref{eq:IandB}), in this section, we always let $x=2r+1$,
which makes $(x+1)/2=r+1$. 

We first begin with the following simple lemma.
\begin{lem}
\begin{equation}
\det V_{n}=H_{n}\left(\frac{B_{2k+1}(r+1)}{2k+1}\right)-\frac{1}{2}H_{n-1}\left(\frac{B_{2k+5}(r+1)}{2k+5}\right).\label{eq:SubstitutionVn}
\end{equation}
\end{lem}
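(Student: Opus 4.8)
The plan is to exploit the fact, already observed in the text around \eqref{eq:IandB}, that $V_{n}$ is almost a genuine Hankel matrix of the sequence $c_{k}:=B_{2k+1}(r+1)/(2k+1)$. Indeed, $V_{n}$ is the $(n+1)\times(n+1)$ matrix with $(i,j)$-entry $I_{2(i+j)}$ for $0\le i,j\le n$, and by \eqref{eq:IandB} we have $I_{2(i+j)}=c_{i+j}$ whenever $i+j>0$, while only the corner entry differs: $(V_{n})_{0,0}=I_{0}=r$, whereas $c_{0}=B_{1}(r+1)=r+\tfrac{1}{2}$. Writing $M:=(c_{i+j})_{0\le i,j\le n}$ for the true Hankel matrix, we therefore have $V_{n}=M-\tfrac{1}{2}E_{0,0}$, where $E_{0,0}$ has a $1$ in the top-left corner and zeros elsewhere.

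First I would apply multilinearity of the determinant in the first column. Since the first column of $V_{n}$ equals that of $M$ minus $\tfrac{1}{2}$ times the standard basis vector $e_{0}$, linearity gives
\[
\det V_{n}=\det M-\tfrac{1}{2}\,\det\widetilde{M},
\]
where $\widetilde{M}$ is $M$ with its first column replaced by $e_{0}=(1,0,\dots,0)^{\mathsf{T}}$. Expanding $\det\widetilde{M}$ along that column shows it equals the $(0,0)$-cofactor of $M$, that is, the minor obtained by deleting the zeroth row and zeroth column of $M$ (the sign is $+1$ since $(-1)^{0+0}=1$).

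Next I would identify the two pieces. Clearly $\det M=H_{n}(c_{k})=H_{n}\!\left(B_{2k+1}(r+1)/(2k+1)\right)$. For the cofactor, deleting row $0$ and column $0$ from $M=(c_{i+j})_{0\le i,j\le n}$ leaves $(c_{i+j})_{1\le i,j\le n}$; reindexing by $i'=i-1,\ j'=j-1$ turns its entries into $c_{i'+j'+2}$ for $0\le i',j'\le n-1$. This is precisely the $n\times n$ Hankel matrix of the twice-shifted sequence, so the cofactor equals $H_{n-1}(c_{k+2})$. Since $c_{k+2}=B_{2(k+2)+1}(r+1)/\bigl(2(k+2)+1\bigr)=B_{2k+5}(r+1)/(2k+5)$, substituting into the displayed identity yields \eqref{eq:SubstitutionVn}.

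There is no serious obstacle here; the only point demanding care is the dimension bookkeeping flagged parenthetically in the text. The corner perturbation is rank one, so its cofactor is an $n\times n$ (not $(n+1)\times(n+1)$) Hankel determinant, which is exactly why the two terms on the right-hand side of \eqref{eq:SubstitutionVn} carry indices $n$ and $n-1$, and why the second term involves a shift from $2k+1$ to $2k+5$ (a shift by two in $k$, corresponding to deleting one row and one column) rather than a shift by one.
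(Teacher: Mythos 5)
Your proposal is correct and follows essentially the same route as the paper: both arguments rest on the observation that $V_{n}$ differs from the true Hankel matrix $(c_{i+j})$ only in the $(0,0)$ entry by $\tfrac{1}{2}$, and both evaluate the resulting correction via the $(0,0)$-cofactor, which is $H_{n-1}\bigl(B_{2k+5}(r+1)/(2k+5)\bigr)$. Your write-up via multilinearity in the first column is just a slightly more explicit packaging of the paper's one-line cofactor expansion.
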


\begin{proof}
By the cofactor expansion, we can see that 
\begin{align*}
 & \quad\det V_{n}-I_{0}H_{n-1}\left(\frac{B_{2k+5}(r+1)}{2k+5}\right)\\
 & =H_{n}\left(\frac{B_{2k+1}(r+1)}{2k+1}\right)-\left(r+\frac{1}{2}\right)H_{n-1}\left(\frac{B_{2k+5}(r+1)}{2k+5}\right),
\end{align*}
which gives the desired result. 
\end{proof}
\begin{proof}
[Proof of \eqref{eq:detVn}] Now, it is apparent to combine (\ref{eq:Hn2k+5})
and (\ref{eq:SubstitutionVn}), in order to prove (\ref{eq:detVn}).
The calculation and simplification are straightforward, but tedious,
so are omitted here. 
\end{proof}
For the rest of this section, we shall provide alternative proofs
for Props.~(\ref{prop:invertible}) and (\ref{prop:Inverse}), without
directly using (\ref{eq:detVn}). This approach eventually leads an
identity involving Stirling numbers, as Cor.~(\ref{cor:Stirling})
in the next section. 
\begin{lem}
We have 
\begin{equation}
V_{r-1}=VS(r)^{T}VS(r),\label{eq:Vandermonde}
\end{equation}
where $VS(n)$ is the $n\times n$ Vandermonde matrix of the first
$n$ squares, namely, 
\[
VS(r)=\begin{pmatrix}1 & 1 & 1 & \cdots & 1\\
1 & 2^{2} & 2^{4} & \cdots & 2^{2(r-1)}\\
1 & 3^{2} & 3^{4} & \cdots & 3^{2(r-1)}\\
\vdots & \vdots & \vdots & \ddots & \vdots\\
1 & n^{2} & n^{4} & \cdots & n^{2(r-1)}
\end{pmatrix}.
\]
\end{lem}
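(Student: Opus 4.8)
The plan is to prove this identity in the most direct possible way, namely by comparing the two sides entry by entry; both are $r\times r$ matrices, so it suffices to match the $(i,j)$ entries for all $1\le i,j\le r$. First I would read off the left-hand side. From the block structure of the Hankel matrix $V_n$ in its definition, the $(i,j)$ entry (with $1\le i,j\le n+1$) is $I_{2(i-1)+2(j-1)}$; specializing to $n=r-1$ and using $I_k=\sum_{c=1}^{r}c^{k}$, the $(i,j)$ entry of $V_{r-1}$ is
\[
\left(V_{r-1}\right)_{i,j}=I_{2(i+j-2)}=\sum_{c=1}^{r}c^{2(i+j-2)},\qquad 1\le i,j\le r.
\]

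Next I would compute the right-hand side. Reading off the displayed form of $VS(r)$, its $(c,j)$ entry is $c^{2(j-1)}=(c^{2})^{j-1}$, so row $c$ records the powers of the square $c^{2}$. By the definition of matrix multiplication,
\[
\left(VS(r)^{T}VS(r)\right)_{i,j}=\sum_{c=1}^{r}\left(VS(r)\right)_{c,i}\left(VS(r)\right)_{c,j}=\sum_{c=1}^{r}c^{2(i-1)}c^{2(j-1)}.
\]
Finally I would merge the exponents via $c^{2(i-1)}c^{2(j-1)}=c^{2(i-1)+2(j-1)}=c^{2(i+j-2)}$, so this sum equals $\sum_{c=1}^{r}c^{2(i+j-2)}$, which is exactly the $(i,j)$ entry of $V_{r-1}$ found above. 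As the two entries agree for every admissible $i,j$, the matrices coincide.

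There is no genuine obstacle in this argument: the statement is the standard fact that a power-sum (moment) Hankel matrix is the Gram matrix of the evaluation vectors, and the proof is pure index bookkeeping. The only point requiring care is matching the conventions, namely recognizing that the square $c^{2}$ is the correct evaluation node, so that a Vandermonde built from the squares $1^{2},\dots,r^{2}$ produces precisely the even powers $c^{2k}$ demanded by the entries $I_{2k}$ of $V_n$. This factorization is exactly what will make the subsequent alternative proof of Prop.~\ref{prop:invertible} transparent, since $VS(r)$ is nonsingular as soon as the squares $1^{2},\dots,r^{2}$ are pairwise distinct.
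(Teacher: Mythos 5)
Your proposal is correct and matches the paper's own proof essentially verbatim: both compute the $(i,j)$ entry of $VS(r)^{T}VS(r)$ as $\sum_{c=1}^{r}c^{2(i-1)}c^{2(j-1)}=I_{2(i+j)-4}$ and identify it with the corresponding entry of $V_{r-1}$, noting the index shift so that the $(1,1)$ entry is $I_{0}$. No further comment is needed.
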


\begin{proof}
This directly follows from 
\[
\sum_{\ell=1}^{r}\left(\ell^{2}\right)^{k-1}\cdot\left(\ell^{2}\right)^{j-1}=\sum_{\ell=1}^{r}\ell^{2(k-1)+2(j-1)}=I_{2(k+j)-4},
\]
which is the $(k,j)$ (and also $(j,k)$) entry of $V_{r-1}$. Here,
please note that the $(1,1)$ entry is $I_{0}$, i.e., with both horizontal
and vertical indices shifted. 
\end{proof}
\begin{cor}
\label{cor:nLessr}For $n<r$, $\det V_{n}\neq0$.
\end{cor}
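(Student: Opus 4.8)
The plan is to exploit the factorization $V_{r-1}=VS(r)^{T}VS(r)$ from the preceding lemma, recognizing each $V_n$ with $n<r$ as a leading principal submatrix of $V_{r-1}$. First I would observe that $VS(r)$ is the Vandermonde matrix built on the nodes $1^2,2^2,\ldots,r^2$, which are pairwise distinct; hence $\det VS(r)=\prod_{1\le i<j\le r}(j^{2}-i^{2})\neq 0$, so $VS(r)$ is invertible. Consequently its columns, namely the vectors $v_{k}:=(\ell^{2(k-1)})_{\ell=1}^{r}$ for $k=1,\ldots,r$, are linearly independent in $\mathbb{R}^{r}$.

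The second step is to identify $V_{n}$ concretely in these terms. Since the $(k,j)$ entry of $V_{r-1}$ is $\langle v_{k},v_{j}\rangle=\sum_{\ell=1}^{r}\ell^{2(k+j-2)}=I_{2(k+j)-4}$, the matrix $V_{n}$, whose $(i,j)$ entry is $I_{2(i+j)}$ for $0\le i,j\le n$, coincides entrywise with the Gram matrix $\big(\langle v_{k},v_{j}\rangle\big)_{1\le k,j\le n+1}$ of the first $n+1$ of these vectors; equivalently, $V_{n}$ is the $(n+1)\times(n+1)$ leading principal submatrix of $V_{r-1}$. This is the one place where the index shift flagged in the lemma's proof (the $(1,1)$ entry being $I_0$, with both indices shifted) must be handled carefully, but the bookkeeping checks out: position $(i,j)$ of $V_n$ in $0$-indexing sits at position $(i+1,j+1)$ of $V_{r-1}$ in $1$-indexing, and both equal $I_{2(i+j)}$.

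Finally, for $n<r$ we have $n+1\le r$, so $v_{1},\ldots,v_{n+1}$ remain linearly independent, and the Gram matrix of a linearly independent set is symmetric positive definite; in particular $\det V_{n}>0$, hence nonzero. (Equivalently, $V_{r-1}$ itself is positive definite as the Gram matrix of an independent set, and every leading principal submatrix of a positive definite matrix is again positive definite by Sylvester's criterion.) I do not expect a serious obstacle here: the entire content is the positivity argument, and the only step requiring genuine care is confirming that $V_{n}$ truly is the leading $(n+1)\times(n+1)$ block of $V_{r-1}$ under the two index conventions, after which positive definiteness — and thus $\det V_n\neq 0$ — is immediate.
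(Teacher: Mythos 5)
Your proposal is correct and follows essentially the same route as the paper: both use the factorization $V_{r-1}=VS(r)^{T}VS(r)$ to conclude that $V_{r-1}$ is symmetric positive definite (with $\det V_{r-1}=\prod_{1\leq\ell<j\leq r}(j^{2}-\ell^{2})^{2}\neq0$), and then observe that each $V_{n}$ with $n<r$ is a leading principal submatrix of $V_{r-1}$ and hence itself positive definite. Your version merely spells out the Gram-matrix interpretation and the index bookkeeping more explicitly than the paper does.
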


\begin{proof}
Note that (\ref{eq:Vandermonde}) indicates that 
\[
\det V_{r-1}=\prod_{1\leq\ell<j\leq r}(j^{2}-\ell^{2})^{2}\neq0.
\]
We see for all $n<r-1$, $V_{n}$ is one of the principal submatrices
of $V_{r-1}$, which is real, symmetric, and positive-definite, due
to again (\ref{eq:Vandermonde}). Hence $V_{n}$ is invertible. 
\end{proof}
\begin{cor}
\label{cor:nGreaterr}For $n>r$, $\det V_{k}=0$. 
\end{cor}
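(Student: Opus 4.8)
The plan is to promote the square Vandermonde factorization of \eqref{eq:Vandermonde} to a rectangular one. For arbitrary $n$, let $M$ denote the $r\times(n+1)$ matrix whose $(c,j)$ entry is $(c^{2})^{j}$, for $c=1,\dots,r$ and $j=0,\dots,n$; this is the rectangular Vandermonde matrix built from the first $r$ squares, and $M=VS(r)$ exactly when $n=r-1$. Repeating verbatim the computation in the proof of \eqref{eq:Vandermonde},
\[
(M^{T}M)_{i,j}=\sum_{c=1}^{r}(c^{2})^{i}(c^{2})^{j}=\sum_{c=1}^{r}c^{2(i+j)}=I_{2(i+j)},
\]
and identifying the right-hand side with the $(i,j)$ entry of $V_{n}$ (indices $i,j$ running from $0$ to $n$, so that the top-left entry is $I_{0}$), I would conclude that $V_{n}=M^{T}M$ in general.

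The corollary then follows from a rank count. Since $M$ has only $r$ rows, $\mathrm{rank}\,M\le r$, and hence $\mathrm{rank}\,V_{n}=\mathrm{rank}(M^{T}M)\le\mathrm{rank}\,M\le r$. For $n>r$ (indeed already for $n\ge r$) the matrix $V_{n}$ has size $(n+1)\times(n+1)$ with $n+1>r\ge\mathrm{rank}\,V_{n}$, so it is rank-deficient and $\det V_{n}=0$, as claimed.

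There is no genuine obstacle here, since the argument is purely a rank inequality; the only thing to watch is the index bookkeeping flagged in the proof of \eqref{eq:Vandermonde}, namely that the rows and columns of $V_{n}$ must be indexed from $0$ so that the exponents match those in $M$. Should one wish to bypass the factorization entirely, an equivalent route is to note that every value $c^{2}$ is a root of $p(y):=\prod_{\ell=1}^{r}(y-\ell^{2})$, so the sequence $(I_{2j})_{j\ge0}$ obeys the order-$r$ linear recurrence with characteristic polynomial $p$; this makes any $r+1$ consecutive columns of $V_{n}$ linearly dependent, forcing $\det V_{n}=0$ once $n\ge r$.
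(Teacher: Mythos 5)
Your proof is correct, but it takes a genuinely different route from the paper's. The paper deduces the corollary from the explicit evaluations (\ref{eq:B2k+1}) and (\ref{eq:B2k+3}): setting $x=2r+1$, the factor $x^{2}-(2\ell-1)^{2}$ vanishes at $\ell=r+1$, so $H_{n}\bigl(B_{2k+1}(r+1)/(2k+1)\bigr)=0$ for $n>r$, and the factor $x^{2}-(2\ell+1)^{2}$ vanishes at $\ell=r$, so, after the left shift (\ref{eq:HankelDetLeftShifted1}), $H_{n-1}\bigl(B_{2k+5}(r+1)/(2k+5)\bigr)=0$ for $n-1>r-1$; the conclusion then follows from (\ref{eq:SubstitutionVn}). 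You instead promote the square factorization (\ref{eq:Vandermonde}) to the rectangular Gram factorization $V_{n}=M^{T}M$ with $M$ of size $r\times(n+1)$, and finish with the rank inequality $\mathrm{rank}\,V_{n}\le\mathrm{rank}\,M\le r<n+1$; your computation of $(M^{T}M)_{i,j}=I_{2(i+j)}$ is exactly the one in the proof of (\ref{eq:Vandermonde}) and the $0$-based index bookkeeping is right. Your argument is more elementary --- it uses none of the Hankel-determinant evaluations --- and strictly stronger, since it gives $\det V_{n}=0$ already for $n\ge r$ and thus absorbs the boundary case $n=r$, which the paper must handle separately in Lemma \ref{lem:Vr} via the null vector built from $T(n,k)$. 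What the paper's route buys is that it costs nothing once (\ref{eq:B2k+1})--(\ref{eq:Hn2k+5}) are established, and its separate treatment of $n=r$ is precisely what produces the Stirling-number identity of Corollary \ref{cor:Stirling}. Your closing alternative --- that $(I_{2j})_{j\ge0}$ satisfies the order-$r$ linear recurrence with characteristic polynomial $\prod_{\ell=1}^{r}(y-\ell^{2})$, making any $r+1$ consecutive columns dependent --- is also valid, and is essentially the transparent form of the eigenvector computation in Lemma \ref{lem:EigenVector}.
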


\begin{proof}
First of all For $n>r$ in (\ref{eq:B2k+1}), the term $(2r+1)^{2}-(2r+1)^{2}$
will appear in the product, so that 
\[
H_{n}\left(\frac{B_{2k+1}(r+1)}{2k+1}\right)=0.
\]
Similarly, for $n>r-1$, in (\ref{eq:B2k+3}), the product becomes
$0$, which also, by (\ref{eq:HankelDetLeftShifted1}), leads to
\[
H_{n}\left(\frac{B_{2k+5}(r+1)}{2k+5}\right)=0.
\]
Therefore, for $n>r$, i.e., $n-1>r-1$, $\det V_{k}=0$, by (\ref{eq:SubstitutionVn}). 
\end{proof}
Now, we can give an alternative proof of Prop.~\ref{prop:Inverse}. 
\begin{proof}
[Proof of Prop.~\ref{prop:Inverse}] Let $n=r-1$ in (\ref{eq:SubstitutionVn})
to see
\[
H_{r-2}\left(\frac{B_{2k+5}(r+1)}{2k+5}\right)=2\left(H_{r-1}\left(\frac{B_{2k+1}(r+1)}{2k+1}\right)-\det V_{r-1}\right).
\]
Meanwhile, by the formula of cofactors to compute the inverse of a
matrix, 
\[
\left(V_{r-1}^{-1}\right)_{1,1}=\frac{H_{r-2}\left(\frac{B_{2k+5}(r+1)}{2k+5}\right)}{\det V_{r-1}}=2\left(\frac{H_{r-1}\left(\frac{B_{2k+1}(r+1)}{2k+1}\right)}{\det V_{r-1}}-1\right).
\]
Therefore, it is equivalent to show
\[
\frac{H_{r-1}\left(\frac{B_{2k+1}(r+1)}{2k+1}\right)}{\det V_{r-1}}=\frac{\binom{4r}{2r}}{\binom{2r}{r}^{2}},
\]
which can be done by simplifying the left-hand side. Details are omitted
here. 
\end{proof}
Finally, we shall complete the proof of Prop.~\ref{prop:invertible}.
By Cors.~\ref{cor:nLessr} and \ref{cor:nGreaterr}, we only need
to show $\det V_{r}=0$. The following sequence plays the essential
rule. 
\begin{defn}
The sequence $T(n,k)$ (A204579\footnote{https://oeis.org/A204579})
can be defined by the recurrence: 
\begin{equation}
T(n,k)=\begin{cases}
1, & n=k=1;\\
T(n-1,k-1)-(n-1)^{2}T(n-1,k), & n>1,1\leq k\leq n;\\
0, & \text{otherwise.}
\end{cases}\label{eq:Tnk}
\end{equation}
\end{defn}

\begin{lem}
\label{lem:EigenVector}Let $\mathbf{0}$ be the zero vector. We have
\begin{equation}
V_{r}\cdot\left(\begin{array}{cccc}
T(r+1,1) & T(r+1,2) & \cdots & T(r+1,r+1)\end{array}\right)^{T}=\mathbf{0}.\label{eq:ZeroVector}
\end{equation}
\end{lem}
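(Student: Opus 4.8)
The plan is to interpret the sequence $T(r+1,k)$ as the coefficient sequence of an explicit polynomial, after which the vanishing claim reduces to a single evaluation. Concretely, I would first prove that for every integer $n\ge 1$,
\[
\prod_{\ell=1}^{n-1}\left(y-\ell^{2}\right)=\sum_{k=1}^{n}T(n,k)\,y^{k-1},
\]
where the empty product (the case $n=1$) is read as $1$. This is an induction on $n$: the base case $n=1$ gives $T(1,1)=1$, matching \eqref{eq:Tnk}. For the inductive step, multiplying the degree-$(n-2)$ polynomial $\prod_{\ell=1}^{n-2}(y-\ell^{2})$ by the factor $y-(n-1)^{2}$ raises the exponent in one copy and rescales by $-(n-1)^{2}$ in the other; extracting the coefficient of $y^{k-1}$ then yields exactly $T(n,k)=T(n-1,k-1)-(n-1)^{2}T(n-1,k)$, the recurrence in \eqref{eq:Tnk}. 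Hence $T(n,k)$ is precisely the coefficient of $y^{k-1}$ in $\prod_{\ell=1}^{n-1}(y-\ell^{2})$.

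Next I would compute the $i$th entry of the left-hand side of \eqref{eq:ZeroVector} directly. Using $I_{k}=\sum_{c=1}^{r}c^{k}$ together with the fact that the $(i,j)$ entry of $V_{r}$ is $I_{2(i-1)+2(j-1)}=\sum_{c=1}^{r}(c^{2})^{i-1}(c^{2})^{j-1}$, the $i$th component becomes
\[
\sum_{j=1}^{r+1}I_{2(i-1)+2(j-1)}\,T(r+1,j)=\sum_{c=1}^{r}(c^{2})^{i-1}\sum_{j=1}^{r+1}T(r+1,j)\,(c^{2})^{j-1},
\]
after interchanging the two finite sums. By the polynomial identity from the first step (taken at $n=r+1$), the inner sum equals $\prod_{\ell=1}^{r}(c^{2}-\ell^{2})$, evaluated at $y=c^{2}$.

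Finally, I would observe that for each $c\in\{1,2,\ldots,r\}$ the product $\prod_{\ell=1}^{r}(c^{2}-\ell^{2})$ contains the factor $(c^{2}-c^{2})=0$ and therefore vanishes. Hence every inner sum is zero, so all $r+1$ components of $V_{r}\cdot(T(r+1,1),\ldots,T(r+1,r+1))^{T}$ are zero, which is \eqref{eq:ZeroVector}. The only genuine content is the identification of $T(n,k)$ with the coefficients of $\prod_{\ell=1}^{n-1}(y-\ell^{2})$; once that is in hand, the remainder is a one-line evaluation exploiting that the Vandermonde nodes $c^{2}$ for $c=1,\ldots,r$ are exactly the roots being annihilated. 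I expect this coefficient identification --- matching the two-term recurrence \eqref{eq:Tnk} against the effect of multiplying by $y-(n-1)^{2}$ --- to be the main (though minor) obstacle, with everything afterward being routine.
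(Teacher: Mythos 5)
Your proposal is correct and follows essentially the same route as the paper: both reduce the claim, after swapping the two finite sums, to the vanishing of the inner sum $\sum_{k}T(r+1,k+1)\,(c^{2})^{k}$ for $c=1,\ldots,r$, and both establish this by an induction driven by the recurrence \eqref{eq:Tnk}. The only difference is one of packaging --- you prove the global polynomial identity $\sum_{k}T(n,k)\,y^{k-1}=\prod_{\ell=1}^{n-1}(y-\ell^{2})$ and then evaluate at $y=c^{2}$, whereas the paper runs the same induction pointwise at $y=c^{2}$, extracting the factor $(c^{2}-m^{2})$ at each step; your version is marginally more informative but not a different argument.
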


\begin{proof}
First of all, (\ref{eq:ZeroVector}) is equivalent to the following
$r+1$ identities: for any $j=0,1,\ldots,r$, 
\begin{equation}
\sum_{k=0}^{r}I_{2j+2k}T(r+1,k+1)=0.\label{eq:ZeroVectorExpansion}
\end{equation}
By definition and simplification, we have
\[
\sum_{k=0}^{r}I_{2j+2k}T(r+1,k+1)=\sum_{c=1}^{r}c^{2j}\sum_{k=0}^{r}c^{2k}T(r+1,k+1).
\]
Now, we claim, for $c=1,2,\ldots,r$, 
\begin{equation}
\sum_{k=0}^{r}c^{2k}T(r+1,k+1)=0,\label{eq:ZeroVectorInner}
\end{equation}
which implies (\ref{eq:ZeroVectorExpansion}), for any $j$. 

When $r=1$, the only case is $c=1$. Since $T(2,1)=-1$, and $T(2,2)=1,$we
have (\ref{eq:ZeroVectorInner}) for $r=1$. Assume (\ref{eq:ZeroVectorInner})
holds for $r=m-1$. For $r=m$, by the recurrence (\ref{eq:Tnk}),
we have
\begin{align*}
\sum_{k=0}^{m}c^{2k}T(m+1,k+1) & =\sum_{k=0}^{m}c^{2k}\left(T(m,k)-m^{2}T(m,k+1)\right)\\
 & =\sum_{k=0}^{m}c^{2k}T(m,k)-m^{2}\sum_{k=0}^{m}c^{2k}T(m,k+1)\\
 & =\sum_{k=1}^{m}c^{2k}T(m,k)-m^{2}\sum_{k=0}^{m-1}c^{2k}T(m,k+1),
\end{align*}
where in the last step, we used $T(m,0)=T(m,m+1)=0$. Finally, by
shifting the summation index of the first sum, we have 
\begin{align*}
\sum_{k=0}^{m}c^{2k}T(m+1,k+1) & =c^{2}\sum_{k=0}^{m-1}c^{2k}T(m,k+1)-m^{2}\sum_{k=0}^{m-1}c^{2k}T(m,k+1)\\
 & =(c^{2}-m^{2})\sum_{k=0}^{m-1}c^{2k}T(m,k+1)=0,
\end{align*}
for $c=0,1,\ldots,m-1$, (by the inductive assumption) and $c=m$
(due to the first factor). 
\end{proof}
\begin{lem}
\label{lem:Vr}$\det V_{r}=0$, for all $r\in\mathbb{N}$. 
\end{lem}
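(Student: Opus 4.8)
The plan is to deduce $\det V_r = 0$ directly from the previous lemma, which exhibits an explicit nonzero vector in the kernel of $V_r$. First I would recall that Lemma~\ref{lem:EigenVector} establishes
\[
V_r \cdot \bigl(T(r+1,1),\,T(r+1,2),\,\ldots,\,T(r+1,r+1)\bigr)^T = \mathbf{0}.
\]
A square matrix with a nontrivial element in its kernel is singular, so the only thing that actually requires argument is that the vector on the left is \emph{not} the zero vector. Once that is checked, the conclusion $\det V_r = 0$ is immediate, since a matrix admitting a nonzero null vector cannot have full rank and hence has vanishing determinant.

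The key step, therefore, is to verify that at least one entry $T(r+1,k)$ is nonzero. The cleanest way I would do this is to track a single ``corner'' value of the triangle defined by the recurrence \eqref{eq:Tnk}. Taking $k = 1$, the recurrence gives $T(n,1) = T(n-1,0) - (n-1)^2 T(n-1,1) = -(n-1)^2 T(n-1,1)$, since $T(n-1,0)=0$. Unrolling this from the initial value $T(1,1)=1$ yields the closed form
\[
T(r+1,1) = (-1)^r \prod_{i=1}^{r} i^2 = (-1)^r (r!)^2,
\]
which is manifestly nonzero for every $r \in \mathbb{N}$. (Equally well one could track the leading entry $T(n,n)=1$, which stays equal to $1$ along the diagonal by the same recurrence, giving an even shorter verification.) Either computation shows the null vector has a nonzero coordinate.

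I expect no real obstacle here: the work done in Lemma~\ref{lem:EigenVector} is what makes the statement nontrivial, and this final lemma is essentially a one-line corollary of it together with the observation that the null vector is nonzero. The only point demanding a moment's care is precisely that nonvanishing, and it is dispatched by either of the two elementary closed forms above. I would state the argument as: by Lemma~\ref{lem:EigenVector} the vector $\mathbf{v} = (T(r+1,k+1))_{k=0}^{r}$ satisfies $V_r \mathbf{v} = \mathbf{0}$; since $T(r+1,r+1)=1 \neq 0$ (equivalently $T(r+1,1)=(-1)^r(r!)^2 \neq 0$), the vector $\mathbf{v}$ is nonzero, whence $V_r$ is singular and $\det V_r = 0$.
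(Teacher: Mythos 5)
Your proposal is correct and follows essentially the same route as the paper: invoke Lemma~\ref{lem:EigenVector} for the null vector and observe that it is nonzero (the paper tracks the diagonal entry $T(m,m)=\cdots=T(1,1)$, which equals $1$, though the printed proof contains a typo writing ``$=0$''). Your additional closed form $T(r+1,1)=(-1)^r(r!)^2$ is a valid alternative witness of nonvanishing but does not change the argument.
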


\begin{proof}
Note that $T(m,m)=T(m-1,m-1)=\cdots=T(1,1)=0$, so the vector in (\ref{eq:ZeroVector})
is not a zero vector. Therefore $V_{r}$ is not invertible. 
\end{proof}

\section{\label{sec:FinalRemarks}Final remarks}

\subsection{An identity on Stirling numbers.}

Let $s(n,k)$ be the Stirling numbers of the first kind and $S(n,k)$
be the Stirling numbers of the second kind. $T(n,k)$ also has an
alternative expression: 
\[
T(r+1,k+1)=\sum_{i=0}^{2k+2}(-1)^{r+1+i}s(r+1,i)s(r+1,2k+2-i).
\]
Meanwhile, recall the well-known connection between Bernoulli numbers
and $S(n,k)$: 
\[
B_{m}=\sum_{\ell=0}^{m}\frac{(-1)^{\ell}\ell!}{\ell+1}S(m,\ell).
\]
(See \cite[Entry (24.15.6)]{NIST}.) Then, (\ref{eq:IandB}) indicates
\[
I_{2j+2k}=\frac{1}{2j+2k+1}\sum_{m=0}^{2j+2k}\binom{2j+2k+1}{m}r^{2j+2k+1-m}\sum_{\ell=0}^{m}\frac{(-1)^{\ell}\ell!}{\ell+1}S(m,\ell),
\]
where the expansion 
\[
B_{n}(x)=\sum_{m=0}^{n}\binom{n}{m}x^{n-m}B_{m}
\]
(see \cite[Entry (24.2.5)]{NIST}) is also applied. Therefore, (\ref{eq:ZeroVectorExpansion})
yields the following identity of Stirling numbers. 
\begin{cor}
\label{cor:Stirling}For any $r\in\mathbb{N}$ and $j=0,1,\ldots,r$,
we have 
\begin{align*}
\sum_{k=0}^{r}\frac{1}{2j+2k+1} & \left(\sum_{m=0}^{2j+2k}\binom{2j+2k+1}{m}((r+1)^{2j+2k+1-m}-1)\sum_{\ell=0}^{m}\frac{(-1)^{\ell}\ell!}{\ell+1}S(m,\ell)\right.\\
 & \times\left.\sum_{i=0}^{2k+2}(-1)^{r+1+i}s(r+1,i)s(r+1,2k+2-i)\right)=0.
\end{align*}
\end{cor}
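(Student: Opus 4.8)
The plan is to take (\ref{eq:ZeroVectorExpansion}), namely $\sum_{k=0}^{r} I_{2j+2k} T(r+1,k+1) = 0$ for each $j = 0,1,\ldots,r$, as the starting point; this already carries all the content, having been established in the proof of Lemma \ref{lem:EigenVector}. The corollary is then this same identity with both families of quantities replaced by their closed forms in Stirling numbers, so the whole task reduces to justifying the two substitutions and reading off the result.

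For the factor $T(r+1,k+1)$, I would first observe that the inner relation (\ref{eq:ZeroVectorInner}) exhibits $y = c^2$ (for $c=1,\ldots,r$) as the $r$ distinct roots of the degree-$r$ polynomial $\sum_{k=0}^{r} T(r+1,k+1) y^k$; since the recurrence (\ref{eq:Tnk}) forces the leading coefficient $T(r+1,r+1)=1$, this polynomial must equal $\prod_{c=1}^{r}(y-c^2)$. Setting $y=x^2$ and splitting $x^2\prod_{c=1}^{r}(x^2-c^2) = \bigl(x\prod_{c=1}^{r}(x-c)\bigr)\bigl(x\prod_{c=1}^{r}(x+c)\bigr)$, each factor is a shifted factorial whose coefficients are signed Stirling numbers of the first kind: $x(x-1)\cdots(x-r) = \sum_{i} s(r+1,i)x^i$ and $x(x+1)\cdots(x+r) = \sum_{i}(-1)^{r+1+i}s(r+1,i)x^i$. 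Extracting the coefficient of $x^{2k+2}$ from the Cauchy product then yields exactly the stated expression $T(r+1,k+1) = \sum_{i=0}^{2k+2}(-1)^{r+1+i}s(r+1,i)s(r+1,2k+2-i)$.

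For the factor $I_{2j+2k}$, I would use $I_n = (B_{n+1}(r+1)-B_{n+1}(1))/(n+1)$ together with the expansion $B_n(x) = \sum_{m}\binom{n}{m}x^{n-m}B_m$; taking the difference of the two evaluations kills the top term ($m=2j+2k+1$) and produces the factor $(r+1)^{2j+2k+1-m}-1$. Substituting $B_m = \sum_{\ell=0}^{m}\frac{(-1)^\ell\ell!}{\ell+1}S(m,\ell)$ then rewrites $I_{2j+2k}$ through Stirling numbers of the second kind. Inserting both closed forms into (\ref{eq:ZeroVectorExpansion}) reproduces the corollary verbatim.

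I expect no genuine obstacle here, since the analytic weight sits entirely in (\ref{eq:ZeroVectorExpansion}); the remaining work is bookkeeping. The one step that is not purely mechanical is identifying $\sum_{k} T(r+1,k+1)y^k$ with $\prod_{c=1}^{r}(y-c^2)$ and factoring it into falling and rising factorials. If one prefers, this Stirling form of $T$ can instead be proved directly by induction on $r$ from (\ref{eq:Tnk}), matching the convolution identity for $s(r+1,\cdot)$, but the generating-function route above is the shorter path.
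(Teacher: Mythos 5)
Your proposal is correct and follows essentially the same route as the paper: substitute the Stirling-number closed forms of $T(r+1,k+1)$ and $I_{2j+2k}$ into the identity $\sum_{k=0}^{r}I_{2j+2k}T(r+1,k+1)=0$ already established in Lemma \ref{lem:EigenVector}. Your derivation of $T(r+1,k+1)=\sum_{i=0}^{2k+2}(-1)^{r+1+i}s(r+1,i)s(r+1,2k+2-i)$ via the monic polynomial $\prod_{c=1}^{r}(y-c^{2})$ and its factorization into falling and rising factorials is a correct justification of a formula the paper only asserts.
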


\subsection{Continued fraction approach in general }

\begin{onehalfspace}
\noindent It is natural to consider the continued fraction to the
generating function of $B_{2k+5}(\tfrac{x+1}{2})/(2k+5)$: 

\noindent 
\[
H(z)=\sum_{n=0}^{\infty}\frac{B_{2n+5}(\frac{1+x}{2})}{2n+5}z^{2n}=\frac{G(z)-B_{3}\left(\frac{1+x}{2}\right)}{z^{2}}=\frac{G(z)-\frac{x^{3}-x}{24}}{z^{2}}
\]
Then, 
\[
\frac{z^{2}H(z)}{\frac{x^{3}-x}{24}}=\frac{G(z)}{\frac{x^{3}-x}{24}}-1.
\]
In the proof of (\ref{eq:B2k+3}), we actually have shown that 
\[
\frac{G(z)}{\frac{x^{3}-x}{24}}=\frac{1}{1-(\alpha_{1}+\alpha_{2})z^{2}-\frac{\alpha_{2}\alpha_{3}z^{4}}{1-(\alpha_{3}+\alpha_{4})z^{2}-\frac{\alpha_{4}\alpha_{5}z^{4}}{1-\ddots}}}.
\]
We then define another sequence $(\beta_{n})_{n\geq1}$ by $\beta_{1}=\alpha_{1}+\alpha_{2}$,
and for $m\geq1$, 
\begin{align}
\beta_{2m-1}\beta_{2m} & =\alpha_{2m}\alpha_{2m+1},\label{eq:DEFBetaN1}\\
\beta_{2m}+\beta_{2m+1} & =\alpha_{2m+1}+\alpha_{2m+2}.\label{eq:DEFBetaN2}
\end{align}
This not only allows us to recursively solve $\beta_{n}$; but also
indicate, by the inverse even contraction (\ref{eq:EvenContraction})
and the odd contraction (\ref{eq:OddContraction}), that
\[
\frac{G(z)}{\frac{x^{3}-x}{24}}=\frac{1}{1-\frac{\beta_{1}z^{2}}{1-\frac{\beta_{2}z^{2}}{1-\ddots}}}=1+\frac{\beta_{1}z^{2}}{1-(\beta_{1}+\beta_{2})z^{2}-\frac{\beta_{2}\beta_{3}z^{4}}{1-(\beta_{3}+\beta_{4})z^{2}-\frac{\beta_{4}\beta_{5}z^{4}}{\ddots}}},
\]
which eventually leads to 
\[
H(z)=\frac{\frac{x^{3}-x}{24}\beta_{1}}{1-(\beta_{1}+\beta_{2})z^{2}-\frac{\beta_{2}\beta_{3}z^{4}}{1-(\beta_{3}+\beta_{4})z^{2}-\frac{\beta_{4}\beta_{5}z^{4}}{\ddots}}}.
\]
Note that 
\[
\frac{x^{3}-x}{24}\beta_{1}=\frac{x^{3}-x}{24}\left(\frac{x^{2}-1}{12}+\frac{x^{2}-4}{15}\right)=\frac{(3x^{2}-7)x(x^{2}-1)}{480}=\frac{B_{5}\left(\frac{x+1}{2}\right)}{5}.
\]
Hence, 
\[
H_{n}\left(\frac{B_{2k+5}\left(\frac{x+1}{2}\right)}{2k+1}\right)=\left(\frac{(3x^{2}-7)x(x^{2}-1)}{480}\right)^{n+1}\prod_{\ell=1}^{n}\left(\beta_{2\ell}\beta_{2\ell+1}\right)^{n+1-\ell}.
\]
In fact, it is not hard to see $D_{n}^{(1)}=\prod_{\ell=0}^{n}\beta_{2\ell+1}$. 
\end{onehalfspace}

\subsection{Further results}

We list some nice partial results with $x=2r+1$ that can be easily
calculated from the results above.
\begin{itemize}
\item $\alpha_{2r+1}=0$, which implies $\tau_{r}^{(1)}=0$.
\item $D_{r-1}^{(1)}=r!^{2}$ and $D_{r}^{(1)}=-\frac{(r+1)!^{2}}{4r+5}.$
\item $\beta_{2r}=0$ and $\beta_{2r+1}=-\frac{(r+1)^{2}}{4r+5}$.
\item 
\[
\frac{H_{r}\left(\frac{B_{2k+1}(r+1)}{2k+1}\right)}{H_{r-1}\left(\frac{B_{2k+5}(r+1)}{2k+5}\right)}=2\quad\text{and}\quad\frac{H_{r-1}\left(\frac{B_{2k+5}(r+1)}{2k+5}\right)}{H_{r-1}\left(\frac{B_{2k+3}(r+1)}{2k+3}\right)}=\left(r!\right)^{2}.
\]
\end{itemize}

\section*{acknowledgment}

We would like to thank Dr.~Christian Krattenthaler for providing
the expressions of (\ref{eq:Hn2k+5}) and (\ref{eq:detVn}), right
after we uploaded the first version of this paper on arXiv. Also,
we thank Dr.~Dongmian Zou for his general idea on considering the
eigenvector with respect to $0$, as the key in the proof of Lem.~\ref{lem:EigenVector},
which eventually leads to Corol.~\ref{cor:Stirling}.


\begin{thebibliography}{10}
\bibitem{Euler}W.~A.~Al-Salam and L.~Carlitz, Some determinants
of Bernoulli, Euler and related numbers, \emph{Portugal. Math.} \textbf{18}
(1959), 91--99.

\bibitem{RamanujanII}B.~C.~Berndt, \emph{Ramanujan\textquoteright s
Notebooks, Part II}. Springer-Verlag, New York 1989. 

\bibitem{Chi}T.~S.~Chihara, \textit{An Introduction to Orthogonal
Polynomials}, Gordon and Breach, 1978.

\bibitem{CF}A.~Cuyt, V.~B.~Petersen, B.~Verdonk, H.~Waadeland,
and W.~B.~Jones, \emph{Handbook of Continued Fractions for Special
Functions}. Springer, New York, 2008.

\bibitem{Wenlin}W.~Dai, X.~Tong, and T.~Tong, Optimal-$k$ sequence
for difference-based methods in nonparametric regression, in preparation. 

\bibitem{Inverse}H.~Dettet, A.~Munk and T.~Wagner, Estimating
the variance in nonparametric regression-what is a reasonable choice?
\emph{J.~R.~Statist.~Soc.~B} \textbf{60} (1998) 60, 751--764. 

\bibitem{DJ1}K.~Dilcher and L.~Jiu, Orthogonal polynomials and
Hankel determinants for certain Bernoulli and Euler polynomials, \emph{J.~Math.~Anal.~Appl}.~\textbf{497}
(2021), Article 124855.

\bibitem{DJ2}K.~Dilcher and L.~Jiu, Hankel determinants of sequences
related to Bernoulli and Euler polynomials. To Appear in \emph{Int.~J.~Number
Theory}, Preprint, 2020, arXiv:2105.01880. 

\bibitem{DJ3}K.~Dilcher and L.~Jiu, Hankel Determinants of shifted
sequences of Bernoulli and Euler numbers, Preprint, 2021, arXiv:2007.09821

\bibitem{ErMaObTr}A~Erd\'{e}lyi, W.~Magnus, F.~Oberhettinger,
and F.~G.~Tricomi, \emph{Higher Transcendental Functions, Vol.~I}.
Based, in part, on notes left by Harry Bateman. McGraw-Hill, 1953.

\bibitem{FK1}M. Fulmek and C. Krattenthaler, The number of rhombus
tilings of a symmetric hexagon which contain a fixed rhombus on the
symmetry axis,II, \emph{Ann.~Combin.}~\textbf{2} (1998), 19--40.

\bibitem{FK2}M. Fulmek and C. Krattenthaler, The number of rhombus
tilings of a symmetric hexagon which contain a fixed rhombus on the
symmetry axis, II, \emph{European J.~Combin.}~\textbf{21} (2000),
601--640.

\bibitem{Han} G.-N.~Han, Jacobi continued fraction and Hankel determinants
of the Thue-Morse sequence, \textit{Quaest. Math.} \textbf{39} (2016),
895--909.

\bibitem{Is} M.~E.~H.~Ismail, \textit{Classical and Quantum Orthogonal
Polynomials in One Variable. With Two Chapters by Walter Van Assche}.
Encyclopedia of Mathematics and its Applications, 98. Cambridge University
Press, Cambridge, 2005.

\bibitem{NIST}F.~W.~J.~Olver et al. (eds.), \emph{NIST Handbook
of Mathematical Functions}, Cambridge Univ.~Press, New York, 2010.
Online version: http://dlmf.nist.gov.

\bibitem{Kr1}C.~Krattenthaler, Advanced determinant calculus, \emph{S\'{e}minaire
Lotharingien Combin}.~\textbf{42} (``The Andrews Festschrift'') (1999),
Article B42q.

\bibitem{Kr2}C.~Krattenthaler, Advanced determinant calculus: a
complement. \emph{Linear Algebra Appl.} \textbf{411} (2005), 68--166. 

\bibitem{Mi}S.~C.~Milne, Infinite families of exact sums of squares
formulas, Jacobi elliptic functions, continued fractions, and Schur
functions, \emph{Ramanujan J}. \textbf{6} (2002), 7--49.

\bibitem{CF2}L.~Lorentzen and H.~Waadeland, \emph{Continued Fractions
with Applications}. Studies in Computational Mathematics, 3. North-Holland,
Amsterdam, 1992.

\bibitem{MWY}L.~Mu, Y.~Wang, and Y.~Yeh, Hankel determinants of
linear combinations of consecutive Catalan-like numbers, \textit{Discrete
Math.} \textbf{340} (2017), 3097--3103.

\bibitem{CFWall}H.~S.~Wall, \emph{Analytic Theory of Continued
Fractions}. Van Nostrand, New York, 1948.
\end{thebibliography}
\end{document}